\newtheorem{lemma}{Lemma}[section]
\newtheorem{rmk}{Remark}[section]
\newcommand*\patchAmsMathEnvironmentForLineno[1]{%
\expandafter\let\csname old#1\expandafter\endcsname\csname #1\endcsname
\expandafter\let\csname oldend#1\expandafter\endcsname\csname end#1\endcsname
\renewenvironment{#1}%
{\linenomath\csname old#1\endcsname}%
{\csname oldend#1\endcsname\endlinenomath}}%
\newcommand*\patchBothAmsMathEnvironmentsForLineno[1]{%
\patchAmsMathEnvironmentForLineno{#1}%
\patchAmsMathEnvironmentForLineno{#1*}}%
\DeclareMathOperator{\diag}{diag} 
    \def\ps@pprintTitle{%
       \let\@oddhead\@empty
       \let\@evenhead\@empty
       \def\@oddfoot{\reset@font\hfil\thepage\hfil}
       \let\@evenfoot\@oddfoot
    }
\patchcmd{\MaketitleBox}{\footnotesize\itshape\elsaddress\par\vskip36pt}{\footnotesize\itshape\elsaddress\par\parbox[b][36pt]{\linewidth}{\vfill\hfill\textnormal{\today}\hfill\null\vfill}}{}{}%
\patchcmd{\pprintMaketitle}{\footnotesize\itshape\elsaddress\par\vskip36pt}{\footnotesize\itshape\elsaddress\par\parbox[b][36pt]{\linewidth}{\vfill\hfill\textnormal{\today}\hfill\null\vfill}}{}{}%
\begin{document}

\begin{frontmatter}

%\newpage
%\title{High-Order Nonlinearly Stable Flux Reconstruction Schemes in Split Forms
%Entropy Stable Split Forms for the Flux Reconstruction High-Order Method
%\thanks{Submitted to the editors DATE.
%\funding{This work was funded by the Natural Sciences and Engineering Research Council of Canada Discovery Grant Program and McGill University.}}}
%\author{Alexander Cicchino\thanks{Department of Mechanical Engineering, McGill University, Montreal, QC 
 % (\email{alexander.cicchino@mail.mcgill.ca}, \email{siva.nadarajah@mcgill.ca}).}
%\and Siva Nadarajah\footnotemark[2]
%\and David C. {Del Rey Fern\'andez} \thanks{Research Scientist, National Institute
%of Aerospace, Computational AeroSciences Branch NASA Langley Research Center (LaRC), and Center for High Performance Aerospace Computations (HiPAC)}(\email{dcdelrey@gmail.com})}

%\title{High-Order Nonlinearly Stable Flux Reconstruction Schemes in Split Form}
\title{Nonlinearly Stable Flux Reconstruction High-Order Methods in Split Form}
\author{Alexander {Cicchino}\corref{cor1}\fnref{CICC}}
\cortext[cor1]{Corresponding author. 
}
  \ead{alexander.cicchino@mail.mcgill.ca}
 \fntext[CICC]{Ph.D. Student Mechanical Engineering McGill University}

\author{Siva {Nadarajah}\fnref{Nadarajah}}
\fntext[Nadarajah]{Associate Professor Mechanical Engineering McGill University}

\ead{siva.nadarajah@mcgill.ca}

\author{David C. {Del Rey Fern\'andez}\fnref{DCDRF}}  
\ead{dcdelrey@gmail.com}
\fntext[DCDRF]{Research Scientist, National Institute
of Aerospace, Computational AeroSciences Branch NASA Langley Research Center (LaRC), and Center for High Performance Aerospace Computations (HiPAC)}

\date{\today}

%\begin{document}

%\maketitle
\begin{abstract}
The flux reconstruction (FR) method has gained popularity in the research community as it recovers promising high-order methods through modally filtered correction fields, such as the discontinuous Galerkin method, amongst others, on unstructured grids over complex geometries. Moreover, FR schemes, specifically energy stable FR (ESFR) schemes also known as Vincent-Castonguay-Jameson-Huynh schemes, have proven attractive as they allow for design flexibility as well as stability proofs for the linear advection problem on affine elements. Additionally, split forms have recently seen a resurgence in research activity due to their resultant nonlinear (entropy) stability proofs. This paper derives for the first time nonlinearly stable ESFR schemes in split form that enable nonlinear stability proofs for, uncollocated, modal, ESFR split forms with different volume and surface cubature nodes. The critical enabling technology is applying the splitting to the discrete stiffness operator. This naturally leads to appropriate surface and numerical fluxes, enabling both entropy stability and conservation proofs. When these schemes are recast in strong form, they differ from schemes found in the ESFR literature as the ESFR correction functions are incorporated on the volume integral. Furthermore, numerical experiments are conducted verifying that the new class of proposed ESFR split forms is nonlinearly stable in contrast to the standard split form ESFR approach. Lastly, the new ESFR split form is shown to obtain the correct orders of accuracy.

\end{abstract}
%\newpage

% REQUIRED
%\begin{keywords}
\begin{keyword}
  discontinuous Galerkin\sep energy stable flux reconstruction\sep summation-by-parts\sep
energy stability\sep 
entropy stability\sep nonlinear conservation law\sep discrete conservation\sep
split operator formulation\sep skew symmetric
%\end{keywords}
\end{keyword}

\end{frontmatter}
%\linenumbers

% REQUIRED
%%%%%%%%%%%FOR SIAM
%\begin{AMS}
  
%\end{AMS}

%\begin{DOI}

%\end{DOI}
%%%%%%%%%%%%%%END FOR SIAM

\section{Introduction}

High-order methods such as discontinuous Galerkin (DG) and
flux reconstruction (FR), result in efficient computations via high solution accuracy and dense
computational kernels, making them an attractive approach for the exascale concurrency on current and next
generation hardware. Generally, high-order methods are known to be more efficient than low-order methods for linear hyperbolic time-dependent problems (e.g., see~\cite{kreiss1972comparison,swartz1974relative}). However, despite vigorous efforts by the research community,
their application to real world complex problems governed by nonlinear partial differential equations (PDEs) has been limited due to a lack of robustness. 

FR schemes, first presented by Huynh~\cite{huynh_flux_2007}, have proven attractive as they allow for design flexibility as well as stability proofs for the linear advection problem on affine elements. Wang and Gao~\cite{wang2009unifying} later presented an alternate approach to the FR scheme. Deemed the Lifting Collocation Penalty (LCP) approach, they considered a \enquote{correction field} applied to the surface integral; instead of reconstructing the flux across the surface of the element~\cite{wang2009unifying}. The authors merged both the FR and LCP in a common framework called Correction Procedure via Reconstruction (CPR)~\cite{wang2011adaptive,huynh2014high}. Now, FR and CPR are loosely interchangeable since Jameson \textit{et al}.~\cite{jameson_non-linear_2012} proved that the FR surface reconstruction is identical to the CPR correction field. FR and CPR were merged into a unified framework of provably linearly stable schemes,
%The FR community has made strides in developing provably stable schemes, 
in the form of energy stable flux reconstruction (ESFR) schemes~\cite{vincent_new_2011,wang2009unifying},
also known as Vincent-Castonguay-Jameson-Huynh (VCJH) schemes. Moreover, ESFR schemes recover other high-order schemes, such as the DG, spectral difference (SD)~\cite{liu_spectral_2006} and spectral volume, by the use of appropriate correction functions. By relating the ESFR correction functions to a DG lifting operator, Allaneau and Jameson~\cite{allaneau_connections_2011} presented one-dimensional ESFR schemes as a filtered DG scheme. This was extended and generalized by Zwanenburg and Nadarajah~\cite{zwanenburg_equivalence_2016} for up to three-dimensional elements, and allowed ESFR to be presented in an arbitrarily modal, uncollocated framework in both weak and strong forms. %ESFR has been proven to be equivalent to a filtered DG scheme~\cite{allaneau_connections_2011,zwanenburg_equivalence_2016}, and thus has the same linear stability criterion as DG for linear advection and diffusion problems on affine elements~\cite{vincent_new_2011, castonguay_phd,sheshadri2016stability,quaegebeur_stability_2019,quaegebeur2D_stability_2019}.

Recently, there has been a concerted research effort to extend classical entropy stability arguments to high-order methods. The original work of Tadmor~\cite{tadmor1984skew} laid a foundation enabling high-order extensions, where Tadmor~\cite{tadmor1984skew} constructed entropy conservative or stable low-order finite volume schemes. These notions were extended by LeFloch~\cite{lefloch2002fully,lefloch2000high} in the context of high-order finite difference stencils. In the last decade, these ideas were expanded to bounded domains by Fisher and co-authors~\cite{fisher2012high}, who combined the Summation-by-parts (SBP) framework with Tamdor's two-point flux functions. The core notion behind the SBP framework  is to account for discrete integration and construct operators that discretely mimic integration by parts (see the two review papers~\cite{fernandez2014review,svard2014review}).
%(e.g.,~\cite{carpenter1994time,carpenter1999stable,nordstrom1999boundary,nordstrom2001high,carpenter2010revisiting,svard2014entropy,parsani2015entropy,parsani2015entropyWall,aalund2019encapsulated}).
The SBP framework has seen rapid development and extensions to various schemes including DG~\cite{gassner2013skew,chan2018discretely}, FR~\cite{ranocha2016summation,abe2018stable}, unstructured methods~\cite{fernandez2014review,crean2018entropy,Crean2019Staggered}, as well as extensions enabling entropy stability proofs
~\cite{fisher2012high,fisher2013high,crean2018entropy,chan2018discretely,FriedrichEntropy2020,parsani2015entropyWall,tadmor1984skew}.
%~\cite{fisher2012high,fisher2013discretely,fisher2013high,carpenter2014entropy,parsani2015entropy,parsani2015entropyWall,parsani2016entropy,carpenter2016towards,yamaleev2017family,crean2018entropy,chen2017entropy,Crean2019Staggered,chan2018discretely,FriedrichEntropy2020}. 
In the context of FR, the first paper to merge a collocated DG split form and ESFR was presented by Ranocha \textit{et al}.~\cite{ranocha2016summation}, for the one-dimensional Burgers' equation, where they proved stability for the DG case. %ESFR case that collapses to the DG equivalent. %although it did not prove stability with the ESFR component. 
The methodology was further expanded for the Euler equations by Abe \textit{et al}.~\cite{abe2018stable}, where stability with ESFR in split forms was only achieved for a specific ESFR discretization; Huynh's $\text{g}_2$ lumped-Lobatto scheme~\cite{huynh_flux_2007}, which was equivalent to a collocated DG scheme on Gauss-Lobatto-Legendre nodes~\cite{de2014connections}. Neither Ranocha \textit{et al}.~\cite{ranocha2016summation} nor Abe \textit{et al}.~\cite{abe2018stable} have provided a general, nonlinearly stable ESFR scheme.

In this paper we take the first critical step towards developing provably entropy stable schemes for hyperbolic conservation laws that are broadly applicable to FR schemes, \textit{i.e.} they account for discretization design decisions such as: modal or nodal basis, uncollocated integration, different volume and surface nodes, \textit{etc}. Specifically, the development of ESFR schemes in split form that result in entropy stability proofs for Burgers' equation is considered. Although the derivation is presented in one-dimension, its extension for three-dimensional ESFR split forms is straightforward. The first main result is to perform the split form for uncollocated DG schemes within the discrete stiffness operator, rather than inverting the mass matrix and performing chain rule with respect to the differential operator. This allows the scheme to utilize the summation-by-parts property with dense norms, and results in the schemes presented by Chan~\cite{chan2018discretely}. In Section~\ref{sec:split forms}, we demonstrate that nonlinearly stable ESFR split forms are naturally derived when contructing ESFR as a filtered 
%we demonstrate that for ESFR schemes, the split form can only be stable if the ESFR schemes are constructed from a filtered 
DG scheme alike Allaneau and Jameson~\cite{allaneau_connections_2011}, and Zwanenburg and Nadarajah~\cite{zwanenburg_equivalence_2016}. We propose a new class of provably nonlinearly stable, uncollocated ESFR schemes in split form by incorporating the ESFR filter on the non-conservative volume term. We also prove that, in general, no nonlinear stability claim can be made if the ESFR filter, \textit{i.e.} the influence of the ESFR correction functions, is only applied to the surface; differentiating our proposed schemes from the literature~\cite{huynh_flux_2007,huynh2014high,vincent_new_2011,allaneau_connections_2011,wang2009unifying,zwanenburg_equivalence_2016,williams_energy_2014,ranocha2016summation,abe2018stable,sheshadri2016stability,castonguay_energy_2013,jameson_non-linear_2012}. % alike~\cite{abe2018stable,ranocha2016summation}. %, unless the correction functions are lumped (this observation proves why Abe \textit{et al.}~\cite{abe2018stable} $\text{g}_2$ lumped-Lobatto scheme was numerically stable). %this result proves the numerical results in~\cite{abe2018stable}. 
The proposed ESFR split form is proven to be nonlinearly stable (Section~\ref{sec:stability proof}) and conservative (Section~\ref{sec:conservation}). The theoretical proofs are numerically verified in Section~\ref{sec: num results}. The results demonstrate that a split form with the ESFR correction functions solely applied to the face is divergent, whereas incorporating the correction functions on the non-conservative volume term ensures entropy stability. Lastly, the schemes proposed by this paper are shown to achieve the correct orders of accuracy in the context of grid refinement.%, albeit having slightly larger error terms that arise from the additional ESFR filtered volume term.

\section{Preliminaries}\label{sec:ESFR}

\subsection{DG Formulation}

The formulation of ESFR used in this paper follows that shown in~\cite{allaneau_connections_2011,zwanenburg_equivalence_2016,Cicchino2020NewNorm}. Consider the scalar 1D conservation law,

\begin{equation}
\begin{aligned}
%\begin{split}
    \frac{\partial}{\partial t}u( {x}^c,t) +\frac{\partial }{\partial x}f(u( {x}^c,t) )=0,t\geq 0, {x}^c\in\bm{\Omega},\\
    u( {x}^c,0)=u_0( {x}^c),
\end{aligned}
\end{equation}

\noindent where $ {f}(u( {x}^c,t) )$ stores the flux, while the $c$ superscript refers to the Cartesian coordinates. For the rest of the article, row vector notation will be used. %, along with tensor analysis notation for the sums with respect to the subscripts and superscripts. 
The computational domain is partitioned into $M$ non-overlapping elements, denoted by $\bm{\Omega}^h$,

\begin{equation}
    \bm{\Omega} \simeq \bm{\Omega}^h \coloneqq \bigcup_{m=1}^M \bm{\Omega}_m.
\end{equation}

The global solution can then be taken as the direct sum of each approximation within each element,
\begin{equation}
    u( {x}^c,t) \simeq u^h( {x}^c,t)=\bigoplus_{m=1}^M u_m^h( {x}^c,t).
\end{equation}

On each element, we represent the solution with $N_p$ linearly independent modal or nodal polynomial basis of a maximum order of $p$; where $N_p=(p+1)^d$, $d$ is the dimension of the problem. The solution expansion on each element is,

\begin{equation}
     u_m^h( {x}^c,t)  = \sum_{i=1}^{N_p}{{\chi}_{m,i}( {x}^c)\hat{u}_{m,i}(t)}= %{\chi}_{m,i}(\bm{x}^c)\hat{u}_{m,i}(t)=
     \bm{\chi}_m( {x}^c)\hat{\bm{u}}_m(t)^T,\label{eq:expansion}
\end{equation}

\noindent where 

\begin{equation}
     \bm{\chi}_m( {x}^c) \coloneqq [\chi_{m,1}( {x}^c)\text{ }\chi_{m,2}( {x}^c) \dots \chi_{m,N_p}( {x}^c)]
\end{equation}

\noindent holds the basis functions for the element. %It is to be noted, since we are using tensor analysis notation, throughout the paper we will be dropping the sum as shown in Eq.~(\ref{eq:expansion}).
The elementwise residual is,

\begin{equation}
     R_m^h( {x}^c,t)=\frac{\partial}{\partial t}u_m^h( {x}^c,t) + \frac{\partial }{\partial x}f(u_m^h( {x}^c,t)).
\end{equation}

The physical coordinates are mapped through an affine mapping to the reference element \newline $ {\xi}^r\in[-1,1]$ by

\begin{equation}
      {x}_m^c( {\xi}^r)\coloneqq  {\Theta}_m( {\xi}^r)=
      \bm{\Theta}_{m}( {\xi}^r)(\hat{\bm{x}}_{m}^c)^T,
\end{equation}

\noindent where $\bm{\Theta}_{m}( {\xi}^r)\coloneqq[\Theta_{m,1}( {\xi}^r) \dots \Theta_{m,N_{t,m}}( {\xi}^r) ]$ are the mapping shape functions of the $N_{t,m}$ physical mapping control points $\hat{\bm{x}}_{m }^c \coloneqq [\hat{ {x}}_{m,1 }^c\dots \hat{ {x}}_{m, N_{t,m}}^c]$. Thus, the elementwise reference residual is,

\begin{equation}
   R_m^{h,r}(\xi^r,t)\coloneqq R_m^{h}( {\Theta}_m( {\xi}^r),t)=  \frac{\partial}{\partial t}u_m^h( {\Theta}_m( {\xi}^r),t) + \frac{1}{J_m^\Omega}\frac{\partial }{\partial \xi}f^r(u_m^h( {\Theta}_m( {\xi}^r),t)),\label{eq: residual reference}
\end{equation}
where the addition of the $r$ superscript denotes evaluation in the reference space and $J_m^\Omega$ represents the determinant of the metric Jacobian. Following a DG framework, we left multiply the residual by an orthogonal test function, and integrate over the computational domain. Choosing the test function to be the same as the basis function, applying integration %, utilizing the orthogonality of the residual with the test function, chosen to be the basis function. After integrating
by parts twice, and evaluating bilinear forms using cubature rules, we arrive at the following strong form: 

\begin{equation}
    \bm{M}_m\frac{d}{dt}\hat{\bm{u}}_m(t)^T+\bm{S}_{\xi}\hat{\bm{f}}_{m}^r(t)^T 
    + \sum_{f=1}^{N_f}
    \bm{\chi}(\bm{\xi}_{f}^r)^T\bm{W}_{f} \diag(\hat{\bm{n}}^r) \bm{{f}}_m^{{{C,r}}^T}    =\bm{0}^T,\label{eq:dg strong}
\end{equation}

\noindent where the derivation is not restricted to one-dimension. 
The mass and stiffness matrices are defined as,

\begin{equation}
    \begin{aligned}
      (\bm{M}_m)_{ij}\approx\int_{\bm{\Omega}_r}J_m^\Omega {\chi}_i( {\xi}^r){\chi}_j( {\xi}^r)d\bm{\Omega}_r \to \bm{M}_m= \bm{\chi}(\bm{\xi}_v^r)^T\bm{W}\bm{J}_m\bm{\chi}(\bm{\xi}_v^r),\\
      (\bm{S}_\xi)_{ij}=\int_{\bm{\Omega}_r} {\chi}_i( {\xi}^r)\frac{\partial}{\partial\xi}{\chi}_j( {\xi}^r)d\bm{\Omega}_r \rightarrow \bm{S}_\xi= \bm{\chi}(\bm{\xi}_v^r)^T\bm{W}\frac{\partial \bm{\chi}(\bm{\xi}_v^r)}{\partial \xi}.
    \end{aligned}
    \nonumber
\end{equation}

\noindent We also explicitly express the mass matrix without Jacobian dependence for later reference,

\begin{equation}
    \bm{M}= \bm{\chi}(\bm{\xi}_v^r)^T\bm{W}\bm{\chi}(\bm{\xi}_v^r).
\end{equation}

\noindent %For the rest of the paper we will drop the double summation notation in Eq.~(\ref{eq:dg strong}). 
$\bm{\xi}_v^r$ and $\bm{\xi}_{f}^r$ refers to the reference coordinate at the $N_{vp}$ volume and $N_{fp}$ facet cubature nodes respectively; with $N_{f}$ being the number of faces on the element. %In one-dimension, there is one facet cubature node on the surface. 
Here $\bm{W}$ and $\bm{W}_{f}$ are diagonal operators storing the quadrature weights of integration at the volume and facet cubature nodes respectively. % $W_{f,k}$ is the quadrature weight at facet cubature node $\xi_{f,k}^r$, 
$\bm{J}_m$ is a diagonal operator storing the determinant of the metric Jacobian evaluated at the volume cubature nodes; while, $\diag(\hat{\bm{n}}^r) = \diag(\hat{{n}}^r) =\diag(\hat{n}^\xi)$ represents the diagonal matrix of the outward pointing normal on the face at the facet cubature nodes in the one-dimensional reference element. In addition, the modal coefficients of the reference flux, $\hat{\bm{f}}_{m}^r(t)$, are equivalent to the discrete $\text{L}_2$ projection of the reference flux constructed at the cubature nodes, $\bm{f}_{m}^r$; \textit{ie} $\hat{\bm{f}}_{m}^r(t)^T=\bm{\Pi}_m^p(\bm{f}_{m}^r)^T$, where $\bm{\Pi}_m^p=\bm{M}_m^{-1}\bm{\chi}(\bm{\xi}_{v}^r)^T\bm{W}\bm{J}_m$. % are given as the discrete metric dependent $\text{L}_2$ projection $\bm{\Pi}_m^p=\bm{M}_m^{-1}\bm{\chi}(\bm{\xi}_{v}^r)^T\bm{W}\bm{J}_m$ of the reference flux evaluated at the cubature nodes. % That is, the modal flux is the modal $\text{L}_2$ projection of the nodal flux.
Lastly, $ { \bm{f}}_m^{{{C,r}}^T}\coloneqq { \bm{f}}_m^{{*,r}^T}\!-\bm{\chi}(\bm{\xi}_{f}^r)\hat{ \bm{f}}_m^r(t)^T$ is the reference numerical flux minus the reference flux across the face $f$.

\iffalse
Also, we are using Lagrange basis functions,

\begin{equation}
    \bm{\chi}(\bm{\xi}_v^r)_{m,i} = \prod_{k=0,k\neq i}^p\frac{\xi_v^r-\xi_i^r}{\xi_k^r-\xi_i^r}.
\end{equation}
\fi

After discrete integration by parts once more, provided the cubature rule is exact for at least $2p-1$, the corresponding weak form is established,
\begin{equation}
    \bm{M}_m\frac{d}{dt}\hat{\bm{u}}_m(t)^T
    -\bm{S}_{\xi}^T\hat{\bm{f}}_{m}^r(t)^T
    + \sum_{f=1}^{N_f}\bm{\chi}(\bm{\xi}_{f}^r)^T\bm{W}_{f}\diag(\hat{\bm{n}}^r)  { \bm{f}}_m^{{{*,r}}^T} =\bm{0}^T.
\end{equation}

%%%%%%%%%%%%%%%%%%%%%%%%%%%%%%%%%%%%%%%%%%%%%%%%%%%%%%%%%%%%%%%%%5

%%%%%%%%%%%%%    ESFR
%%%%%%%%%%%%%%%%%%%%%%%%%%%%%%%%%%%%%%%%%%%%%%%%%%%%%%%%%%%%%%%%%%%5

\subsection{Corresponding ESFR Scheme}

The ESFR scheme is derived using $p+1$ correction functions, $ {g}^{f,k}( {\xi}^r)$, specific to face $f$, facet cubature node $k$, such that,

\begin{equation}
      {g}^{f,k}( {\xi}_{f_i, k_j}^r)   \hat{  {n}}^\xi = \begin{cases}
    1 \text{ if } f_i = f,\text{ and } k_j=k\\
    0 \text{ otherwise.}
    \end{cases}\label{eq: ESFR conditions}
\end{equation}

The one-dimensional correction functions are defined by the symmetry condition \newline $g^L(\xi^r)=-g^R(-\xi^r)$ to satisfy Eq.~(\ref{eq: ESFR conditions}), and the %Coupled with the symmetry condition $g^L(\xi^r)=-g^R(-\xi^r)$ to satisfy Eq.~(\ref{eq: ESFR conditions}), to construct the one-dimensional correction functions, the 
fundamental assumption of ESFR~\cite[Eqs. (3.31), (3.32)]{ vincent_new_2011},

\begin{equation}
    \int_{\bm{\Omega}_r} \frac{\partial{\chi}_i({\xi}^r)}{\partial\xi} {g}^{f,k}({\xi}^r)d\bm{\Omega}_r - c\frac{\partial^p {\chi}_i( {\xi}^r)^T}{\partial\xi^p}
    \frac{\partial^{p+1} {g}^{f,k}({\xi}^r)}{\partial\xi^{p+1}}= {0},\: \forall i=1,\dots,N_p .
    \label{eq:1D ESFR assump}
\end{equation}

\iffalse

The ESFR scheme was derived using $p+1$ correction functions, $\bm{g}^L$ for left of the face and $\bm{g}^R$ for right of the face, were derived as follows.

\begin{equation}
    \bm{g}^L(\bm{\xi})=-\bm{g}^R(-\bm{\xi}).
\end{equation}

The subscript ref refers to the basis in an orthonormal reference space.

\begin{equation}
    \int_{\Omega_r}\frac{\partial}{\partial\xi}\bm{\chi}_{ref}(\bm{\xi}^r)^T\bm{g}^L(\bm{\xi}^r)d\Omega_r - c(\frac{\partial^p}{\partial\xi^p}\bm{\chi}_{ref}(\bm{\xi}^r)^T)
    (\frac{\partial^{p+1}}{\partial\xi^{p+1}}\bm{g}^L(\bm{\xi}^r))=\bm{0}^T,
\end{equation}
and
\begin{equation}
    \int_{\Omega_r}\frac{\partial}{\partial\xi}\bm{\chi}_{ref}(\bm{\xi}^r)^T\bm{g}^R(\bm{\xi}^r)d\Omega_r - c(\frac{\partial^p}{\partial\xi^p}\bm{\chi}_{ref}(\bm{\xi}^r)^T)
    (\frac{\partial^{p+1}}{\partial\xi^{p+1}}\bm{g}^R(\bm{\xi}^r))=\bm{0}^T.
\end{equation}

\fi

Here, $c$ represents the correction parameter, with values of $c_{DG}$, $c_{SD}$, $c_{HU}$, and $c_+$. Each parameter results in the scheme having different properties; where, $c_{DG}$ recovers a DG scheme, $c_{SD}$ recovers a spectral difference scheme, and $c_{HU}$ recovers Huynh's $\text{g}_2$ scheme~\cite{vincent_new_2011}. Lastly, the value of $c_+$ does not have an analytical value, but has numerically been shown to be the upper limit in a von Neumann analysis of the correction parameter before the scheme loses an order of accuracy~\cite{castonguay_phd}. Note that as $c$ increases, the maximum time step increases.
\iffalse
We show these values in~\ref{sec:appendix 1}.
\fi

As illustrated in~\cite{zwanenburg_equivalence_2016,allaneau_connections_2011,Cicchino2020NewNorm} the corresponding ESFR strong form is,

\begin{equation}
    (\bm{M}_m+\bm{K}_m)\frac{d}{dt}\hat{\bm{u}}_m(t)^T
   +\bm{S}_{\xi} \hat{\bm{f}}_{m}^r(t)^T
	+\sum_{f=1}^{N_f}
	\bm{\chi}(\bm{\xi}_{f}^r)^T \bm{W}_{f}\diag(\hat{\bm{n}}^r) \bm{f}_m^{{C,r}^T} =\bm{0}^T,\label{eq:ESFR strong}
\end{equation}

\noindent while the corresponding weak form is,

\begin{equation}
    (\bm{M}_m+\bm{K}_m)\frac{d}{dt}\hat{\bm{u}}_m(t)^T
   -\bm{S}_{\xi}^T \hat{\bm{f}}_{m}^r(t)^T
	+\sum_{f=1}^{N_f}
	\bm{\chi}(\bm{\xi}_{f}^r)^T \bm{W}_{f}\diag(\hat{\bm{n}}^r)  \bm{{f}}_m^{{*,r}^T } =\bm{0}^T.\label{eq:weak form}
\end{equation}

The entire influence of the ESFR correction functions are stored in $\bm{K}_m$, which we define as

\begin{equation}
\begin{split}
    (\bm{K}_m)_{ij} = c \int_{ \bm{\Omega}_r} J_m^\Omega \frac{\partial ^p \chi_i( {\xi}^r)}{\partial \xi^p}\frac{\partial ^p \chi_j( {\xi}^r)}{\partial \xi^p} d \bm{\Omega_r}
    \to 
    \bm{K}_m = 
     c
    (
    \bm{D}^{p} 
    )^T 
    \bm{M}_m
     (\bm{D}^{p}),
    \end{split}\label{eq:Km}
\end{equation}
where the $p^{\text{th}}$ degree strong form differential operator~\cite{Cicchino2020NewNorm} is construed as
\begin{equation}
    \bm{D}^{p} = \Big(\bm{M}^{-1}\bm{S}_\xi\Big)^p.
    \iffalse
    \Big(\bm{\chi}(\bm{\xi}_v^r)^{-1}
    \frac{d \bm{\chi}(\bm{\xi}_v^r)}
    {d \xi} \Big)^{p}.
    \fi
\end{equation}

%\noindent We present $\bm{K}_m$ in the form of Eq.~(\ref{eq:Km}), which is done after performing the transformation from a normalized Legendre reference basis to the basis of the scheme, so as to keep everything in the same basis. Since this form of $\bm{K}_m$ is dependent on a normalized Legendre reference basis, we use the normalized $c$ values derived in~\cite{Cicchino2020NewNorm}. Additionally, we included Jacobian dependence to allow Eq.~(\ref{eq:Km}) to be applied to curvilinear examples in future work. 

\begin{rmk}
Unlike in~\cite{allaneau_connections_2011,zwanenburg_equivalence_2016,ranocha2016summation} where $\bm{K}_m$ was constructed using the Legendre differential operator then transformed to the basis of the scheme, here, $\bm{K}_m$ in Eq.~(\ref{eq:Km}) is computed directly with the differential operator of the scheme; where $c$ must take the value from a normalized Legendre reference basis~\cite{Cicchino2020NewNorm}. This was proven in~\cite[Sec. 3.1]{Cicchino2020NewNorm}
\end{rmk}
\section{SBP-ESFR Split Forms for Burgers' Equation}\label{sec:split forms}

In this section we analyze a new proposed splitting that enables nonlinear stability proofs for general FR schemes, where the modal or nodal basis functions can be evaluated on uncollocated volume and surface cubature nodes. %with respect to the stiffness operator, to allow for stability with dense norms. 
We will consider the Burgers' equation,

\begin{equation}
    \frac{\partial}{\partial t}u + \frac{\partial}{\partial x}(\frac{u^2}{2}) = 0.
\end{equation}

As demonstrated in~\cite{gassner2013skew,ranocha2016summation}, for a collocated DG strong form scheme, entropy and energy stablility was ensured if the differential operator is split into a linear combination of the conservative and chain rule forms. This was achieved by observing that the strong form differential operator satisfies the SBP property with respect to the metric Jacobian independent mass matrix,

\begin{equation}
\begin{split}
    \bm{M}_{\text{GLL}}\bm{D}+\bm{D}^T\bm{M}_{\text{GLL}}=\bm{B},
   %\Leftrightarrow \bm{Q}+\bm{Q}^T=\bm{E},
    \end{split}\label{eq:SBP}
\end{equation}

\noindent where %$\bm{D} = \bm{M}_{\text{GLL}}^{-1}\bm{Q}$, 
$\bm{B}=\diag[-1,\dots, 1]$, with Gauss-Lobatto-Legendre (GLL) quadrature points or Gauss-Legendre (GL) quadrature points in one-dimension. The proposed splitting in~\cite{gassner2013skew} was,

\begin{equation}
    \begin{split}
    \frac{d}{dt}{\bm{u}}_m^T
   = \frac{1}{J_m^\Omega}\Bigg[-\alpha \bm{D}(\frac{1}{2}{\bm{U}}{\bm{u}}_m^T) 
   -(1-\alpha){\bm{U}}\bm{D}({\bm{u}}_m^T)
	-\bm{M}_{\text{GLL}} ^{-1}
\sum_{f=1}^{N_f}	\bm{\chi}(\bm{\xi}_{f}^r)^T \bm{W}_{f}\diag(\hat{\bm{n}}^r)\bm{{f}}_m^{{C,r} ^T} \Bigg],
	\end{split}\label{eq:Gass split}
\end{equation}

\noindent which was achieved using a collocated nodal Lagrange basis, with %$\bm{D}=\bm{M}^{-1}\bm{S}_\xi$ and 
$\bm{U}=\diag(\bm{u}_m)$. This was further expanded for a classical ESFR scheme in~\cite{ranocha2016summation} as,

\begin{equation}
    \begin{split}
    \frac{d}{dt}{\bm{u}}_m^T
   = \frac{1}{J_m^\Omega}\Bigg[-\alpha \bm{D}(\frac{1}{2}{\bm{U}}{\bm{u}}_m^T) 
   -(1-\alpha){\bm{U}}\bm{D}({\bm{u}}_m^T)
	-(\bm{M}_{\text{GLL}}+\bm{K}) ^{-1}
	\sum_{f=1}^{N_f}	\bm{\chi}(\bm{\xi}_{f}^r)^T \bm{W}_{f}\diag(\hat{\bm{n}}^r)\bm{{f}}_m^{{C,r} ^T} \Bigg],
	\end{split}\label{eq:ranocha split}
\end{equation}

\iffalse
To demonstrate stability, the following broken Sobolev norm was proposed by ~\cite{gassner2013skew} for a DG scheme, then expanded by ~\cite{ranocha2016summation} to allow for ESFR schemes,
\fi

\noindent where $\bm{K}$ is the metric Jacobian independent ESFR correction operator.

To demonstrate stability, the following broken Sobolev-norm was proposed by~\cite{jameson_proof_2010} for ESFR schemes:

\begin{equation}
\begin{split}
\text{Continuous Broken Sobolev-norm: }\int_{\bm{\Omega}_r}{\Big(\bm{\chi}({\xi^r})\hat{\bm{u}}_m(t)^T \Big)^T {J}_m^\Omega \Big(\bm{\chi}({\xi^r})\frac{d}{dt}\hat{\bm{u}}_m(t)^T \Big)} d\bm{\Omega}_r+\\ \int_{\bm{\Omega}_r}{c\Big(\frac{\partial^p}{\partial \xi^p}(\bm{\chi}({\xi^r})\hat{\bm{u}}_m(t)^T)
\Big)^T {J}_m^\Omega \Big(  \frac{\partial^p}{\partial \xi^p}(\bm{\chi}({\xi^r})\frac{d}{dt}\hat{\bm{u}}_m(t)^T )
\Big)d\bm{\Omega}_r}\\
    \implies \text{Discrete Broken Sobolev-norm: }\frac{1}{2}\frac{d}{dt}\|\bm{u}\|_{M_m+K_m}^2=\hat{\bm{u}}_m(t) (\bm{M}_m+\bm{K}_m) \frac{d}{dt}\hat{\bm{u}}_m(t) ^T.
\end{split}\label{eq:init energy ESFR}
\end{equation}

 Applying the discrete broken Sobolev-norm to Eq.~(\ref{eq:ranocha split}), and using the property that $\bm{K} \bm{D} =0$ since it is the $p+1$ derivative of a $p^{\text{th}}$ order basis function, results in,

\begin{equation}
    \begin{split}
    \frac{1}{2}\frac{d}{dt}\|\bm{u}\|_{M_m+K_m}^2
   = \Bigg[-\alpha {\bm{u}}_m \bm{M}_{GLL}\bm{D}(\frac{1}{2}{\bm{U}}{\bm{u}}_m ^T) 
   -(1-\alpha){\bm{u}}_m \bm{M}_{GLL}{\bm{U}}\bm{D}({\bm{u}}_m ^T)\\
   -(1-\alpha){\bm{u}}_m \bm{K}{\bm{U}}\bm{D}({\bm{u}}_m ^T)
	-
	 \sum_{f=1}^{N_f}	{\bm{u}}_m \bm{W}_{f}\diag(\hat{\bm{n}}^r)\bm{{f}}_m^{{C,r} ^T} \Bigg].
	\end{split}\label{eq:energy ranocha}
\end{equation}

Both~\cite{gassner2013skew,ranocha2016summation} used the property that for a collocated nodal Lagrange basis, the mass matrix is a diagonal operator and therefore commutes with $\bm{U}$ in the second volume term. This then allows the use of the SBP property to relate the two volume terms to a face term. The issue that was illustrated in Ranocha \textit{et al}.~\cite{ranocha2016summation} was that for an ESFR scheme, $\bm{K} \bm{U}\bm{D}\neq \bm{0}$, nor is it in general positive semi-definite. Thus, when the split form (chain rule) is applied on the differential operator, no stability claim can be made for an ESFR scheme, unless $\bm{K}=\bm{0}$ as used by Ranocha \textit{et al}.~\cite{ranocha2016summation} or $\bm{K}$ is a diagonal operator. In Abe \textit{et al}.~\cite{abe2018stable}, they used Huynh's $\text{g}_2$ lumped-Lobatto scheme~\cite{huynh_flux_2007} which was equivalent to formulating $\bm{M}+\bm{K}$ on uncollocated Gauss-Legendre nodes, with a value of $c_{HU}$. By design, $\bm{K}_{HU}$ ($\bm{K}$ evaluated with a value of $c_{HU}$ on GL nodes) was chosen such that $\bm{M}_{GL}+\bm{K}_{HU}=\bm{M}_{GLL}$, the collocated lumped DG mass matrix on Gauss-Lobatto-Legendre nodes~\cite{huynh_flux_2007,de2014connections,vincent_new_2011}. Then, the crucial step was to evaluate the flux on GLL nodes, which results in an equivalent DG collocated GLL scheme~\cite{huynh_flux_2007,de2014connections,vincent_new_2011,abe2018stable}. In a sense, the $\text{g}_2$ lumped-Lobatto scheme operates on mixed nodes (with regards to the volume flux on GLL and correction functions on GL), since in the original FR framework~\cite{huynh_flux_2007} no concept of quadrature integration was introduced.
\begin{rmk}
For consistency, since the ESFR formulation used in this work is completely general, there is no assumption on the nodes the volume flux is integrated on, other than being exact for at least $2p-1$. Thus, in the ESFR strong and weak forms presented in Equations~(\ref{eq:ESFR strong}) and~(\ref{eq:weak form}), Huynh's $\text{g}_2$ lumped-Lobatto scheme is equivalent to using a collocated basis on GLL nodes and a value of $c_{DG}$, or using a value of $c_{HU}$ with $\bm{M}+\bm{K}$ evaluated with an integration strength of at least $2p$, %on GL nodes 
and the volume flux on GLL nodes. Huynh's $\text{g}_2$ lumped-Lobatto scheme is not equivalent to computing both $\bm{M}+\bm{K}$ and the volume flux on the same set of nodes, with a value of $c_{HU}$. We numerically verify this in Section~\ref{sec: num results}.
\end{rmk}

\iffalse
If $\bm{K}$ is diagonal, then it can be interchanged with $\bm{U}$, resulting in $\bm{K}\bm{U}\bm{D}=\bm{U}\bm{K} \bm{D}=\bm{0}$. For example, in Abe \textit{et al}.~\cite{abe2018stable}, their \enquote{$\text{g}_2$ lumped-Lobatto} scheme was numerically demonstrated to be stable, which corresponds to the case where $\bm{K}$ is diagonal.%although Euler's equations were analyzed, the issue with stability in split forms corresponds exactly to the $\bm{K} \bm{U}\bm{D}$ term that arises in Burgers' equation. Although no stability proof was shown, from the numerical results in~\cite{abe2018stable}, the only ESFR scheme that ensured Kinetic energy preservation was their \enquote{$\text{g}_2$ lumped'Lobatto} scheme, which corresponds exactly to \enquote{lumping} the $\bm{K}$ operator such that it is diagonal alike in modal space, and using a value of $c_{HU}$. 

\fi

\iffalse
Therefore, in the following subsections, we propose to apply the splitting while in variational form, with the intuitive reasoning in \ref{sec:split stiff}, and demonstrate that it allows energy/entropy stability with uncollocated, fully dense ESFR schemes, which has not yet been achieved in the literature. 
\fi

%%%%%%%%%%%%%%%%%%%%%%%%%%%%%%%%%%%%%%%%%%%%%%%%%%

%%%%%%%%%%%%             Proposed Splitting

%%%%%%%%%%%%%%%%%%%%%%%%%%%%%%%%%%%%%%%%%%%%%%5

\subsection{Proposed Splitting with Respect to the Stiffness Operator}

The stiffness operator satisfies a discrete integration by parts rule, \textit{i.e.} an SBP property, for quadrature rules exact for at least $2p-1$,

\begin{equation}
    \begin{split}
    \int_{\bm{\Omega}_r}{{\chi}_i( {\xi}^r)\frac{\partial}{\partial\xi}{\chi}_j( {\xi}^r)d\bm{\Omega}_r}
   +\int_{\bm{\Omega}_r}{\frac{\partial}{\partial\xi}{\chi}_i( {\xi}^r){\chi}_j( {\xi}^r)d\bm{\Omega}_r}
   =\int_{\bm{\Gamma}_r}{{\chi}_i( {\xi}^r){\chi}_j( {\xi}^r)\hat{n}^\xi d\bm{\Gamma}_r}\\
   \Leftrightarrow    \bm{S}_\xi + \bm{S}_\xi^T=\sum_{f=1}^{N_f}\bm{\chi}(\bm{\xi}_{f}^r)^T\bm{W}_{f}\diag(\hat{\bm{n}}^\xi) \bm{\chi}(\bm{\xi}_{f}^r) .
    \end{split}\label{eq: integration by parts}
\end{equation}

Since the underlying SBP property results from the fact that the stiffness operator satisfies discrete integration by parts, Eq.~(\ref{eq: integration by parts}), and observing the ESFR strong and weak forms (Eq.~(\ref{eq:ESFR strong}) and~(\ref{eq:weak form}) respectively), we propose to construct the split form based upon the stiffness operator rather than the differential operator, in contrast to previous works~\cite{gassner2013skew,ranocha2016summation,abe2018stable,chan2018discretely}. 

Returning to the continuous analog of Eq.~(\ref{eq:ESFR strong}), writing it in variational form results in,

\begin{equation}
    \begin{split}
        \int_{\bm{\Omega}_r}{ \Big( {\chi}_i( {\xi}^r)   {J}_m^\Omega \bm{\chi}( {\xi}^r) +c \frac{\partial ^p  {\chi}_i( {\xi}^r)}{\partial \xi^p }  {J}_m^\Omega \frac{\partial ^p  \bm{\chi}( {\xi}^r)}{\partial \xi^p }   \Big)\frac{d}{dt} \hat{\bm{u}}_m(t)^T d\bm{\Omega}_r} \\
        =
        -\int_{\bm{\Omega}_r}{ {\chi}_i( {\xi}^r) \frac{\partial }{\partial \xi} (\frac{1}{2}u_m^2) d\bm{\Omega}_r}
        -\int_{\bm{\Gamma}_r}{ {\chi}_i( {\xi}^r)    \hat{ {n}}^r  {{f}}_m^{C,r} d\bm{\Gamma}_r},\:\forall i=1,\dots,N_p.
    \end{split}
\end{equation}

%Substituting $\bm{f}_m^r=\frac{1}{2}\bm{U}\bm{u}_m^T$, where $\bm{U}=diag[\bm{\chi}(\bm{\xi_{v}^r})\hat{\bm{u}}_m(t)^T]$, 
\noindent Performing chain rule with respect to the reference coordinate gives,% (since in 1D $J_m^\Omega \frac{\partial}{\partial x }= \frac{\partial}{\partial \xi}$),

\begin{equation}
    \begin{split}
        \int_{\bm{\Omega}_r}{ \Big(  {\chi}_i( {\xi}^r)  {J}_m^\Omega \bm{\chi}( {\xi}^r) +c \frac{\partial ^p   {\chi}_i({\xi}^r)}{\partial \xi^p }  {J}_m^\Omega \frac{\partial ^p  \bm{\chi}( {\xi}^r)}{\partial \xi^p }   \Big)\frac{d}{dt} \hat{\bm{u}}_m(t)^T d\bm{\Omega}_r} \\
        =
        -\alpha \int_{\bm{\Omega}_r}{  {\chi}_i( {\xi}^r) \frac{\partial }{\partial \xi} (\frac{1}{2}u_m^2)  d\bm{\Omega}_r} 
        -(1-\alpha)\int_{\bm{\Omega}_r}{  {\chi}_i( {\xi}^r)  u_m \frac{\partial }{\partial \xi}( {{u}}_m ) d\bm{\Omega}_r} \\
        -\int_{\bm{\Gamma}_r}{ {\chi}_i( {\xi}^r)   \hat{ {n}}^r  {{f}}_m^{C,r}  d\bm{\Gamma}_r},\:\forall i=1,\dots,N_p.
    \end{split}\label{eq: continuous split}
\end{equation}

Here, alike~\cite{gassner2013skew}, $\alpha\in[0,1]$. After evaluating at the appropriate cubature nodes, we invert the ESFR filter operator on the left hand side, and present it in discretized form,

\begin{equation}
\begin{split}
    \frac{d}{dt}\hat{\bm{u}}_m(t)^T
   = -\alpha (\bm{M}_m+\bm{K}_m) ^{-1}\bm{S}_{\xi}\hat{\bm{f}}_m^r(t)^T
 %  -\alpha (\bm{M}_m+\bm{K}_m) ^{-1}\bm{S}_{\xi}\bm{\Pi}_m^p(\frac{1}{2}\bm{U}(\bm{\chi}(\bm{\xi_{v}^r})\hat{\bm{u}}_m(t)^T)) \\
   -(1-\alpha)(\bm{M}_m+\bm{K}_m) ^{-1}\bm{\chi}(\bm{\xi}_{v}^r)^T\bm{U}\bm{W}\frac{\partial\bm{\chi}(\bm{\xi}_{v}^r)}{\partial {\xi}}\hat{\bm{u}}_m(t)^T\\
	-(\bm{M}_m+\bm{K}_m) ^{-1}
	\sum_{f=1}^{N_f}\bm{\chi}(\bm{\xi}_{f}^r)^T \bm{W}_{f}\diag(\hat{\bm{n}}^r)\bm{{f}}_m^{{C,r}^T}.
	\end{split}
\end{equation}

%Noticing the presented form, we include the ESFR filter term to the second volume term from the splitting, which completely solves the issue with $\bm{K}\bm{U}\bm{D}$ explained previously. This additional filter term on the second volume term allows stability, while still maintaining the correct orders shown in Sec.~\ref{sec: num results}. 

Unless GLL is employed as the volume cubature nodes, the nonlinear flux interpolated to the face does not generally equal the flux on the face evaluated using the solution interpolated to the face. Therefore, we also split the flux on the face using $\bm{\chi}(\bm{\xi}_{f}^r)\hat{\bm{f}}_m^{{r}}(t)^T$ as the interpolation of the modal coefficients of the volume flux to the face, and ${ \bm{f}}_{f}^{{r}^T}=\frac{1}{2}\diag\Big(\bm{\chi}(\bm{\xi}_{f}^r)\hat{\bm{u}}_m(t)^T\Big)\Big(\bm{\chi}(\bm{\xi}_{f}^r)\hat{\bm{u}}_m(t)^T\Big)$ as the flux on face $f$ %at facet cubature node $k$ 
evaluated using the solution interpolated to the face. The final ESFR strong split form is thus,

\begin{equation}
\begin{split}
    \frac{d}{dt}\hat{\bm{u}}_m(t)^T
   = -\alpha (\bm{M}_m+\bm{K}_m) ^{-1}\bm{S}_{\xi}\hat{\bm{f}}_m^r(t)^T
   -(1-\alpha)(\bm{M}_m+\bm{K}_m) ^{-1}\bm{\chi}(\bm{\xi}_{v}^r)^T\bm{U}\bm{W}\frac{\partial\bm{\chi}(\bm{\xi}_{v}^r)}{\partial {\xi}}\hat{\bm{u}}_m(t)^T\\
	-(\bm{M}_m+\bm{K}_m) ^{-1}\sum_{f=1}^{N_f}
	\bm{\chi}(\bm{\xi}_{f}^r)^T \bm{W}_{f}\diag(\hat{\bm{n}}^r) \Big( \bm{{f}}_m^{{*,r}^T}-\alpha\bm{\chi}(\bm{\xi}_{f}^r)\hat{\bm{f}}_m^{{r}}(t)^T-(1-\alpha)\bm{{f}}_{f}^{{r}^T}\Big) .
	\end{split}\label{eq:split strong}
\end{equation}

If one were to consider the general differential operator applied to the entropy conservative two-point flux in Chan~\cite{chan2018discretely}, then the face splitting appears naturally when grouping the face lifting terms and the DG result of Eq.~(\ref{eq:split strong}) is equivalent to A.14 in~\cite{chan2018discretely}. Additionally, note that for a collocated DG scheme at Gauss-Lobatto-Legendre nodes, $\bm{\chi}(\bm{\xi}_{f}^r)\hat{\bm{f}}_m^{{r}}(t)^T=\bm{{f}}_f^{{r}^T}$, and hence no interpolation of the flux to the face was required, and thus face splitting did not appear in~\cite{gassner2013skew}.

To convert Eq.~(\ref{eq:split strong}) to the weak form, we perform discrete integration by parts, Eq.~(\ref{eq: integration by parts}), on the two volume terms. For the first volume term we directly substitute Eq.~(\ref{eq: integration by parts}). For the second volume term we first substitute $\frac{\partial\bm{\chi}(\bm{\xi}_{v}^r)}{\partial {\xi}}=\bm{\chi}(\bm{\xi}_{v}^r)\bm{M}^{-1}\bm{S}_\xi $, then introduce the discrete $\text{L}_2$ projection operator $\bm{\Pi}_m^{{p}}$ and finally Eq.~(\ref{eq: integration by parts}), 

%Since we can not integrate the second volume term by parts across $\bm{U}$, we introduce $\bm{\Pi}_m^{{p}^T}\bm{\chi}(\bm{\xi}_v^r)^T$ in between $\bm{U}$ and $\bm{W}$, to allow for integration by parts with respect to the stiffness operator. This is the primary reason we consider the presented split form as splitting with respect to the stiffness operator. After integrating the first and second volume terms by parts and relating the face terms, we propose the weak form presented in a split form as,

\begin{equation}
    \begin{split}
    \frac{d}{dt}\hat{\bm{u}}_m(t)^T
   = \alpha (\bm{M}_m+\bm{K}_m) ^{-1}\bm{S}_{\xi}^T\hat{\bm{f}}_m^r(t)^T
   +(1-\alpha)(\bm{M}_m+\bm{K}_m) ^{-1}\bm{\chi}(\bm{\xi}_{v}^r)^T\bm{U}\bm{\Pi}_m^{{p}^T}\bm{S}_{\xi}^T\hat{\bm{u}}_m(t)^T\\
	-(\bm{M}_m+\bm{K}_m) ^{-1}\sum_{f=1}^{N_f}
	\bm{\chi}(\bm{\xi}_{f}^r)^T \bm{W}_{f}\diag(\hat{\bm{n}}^r)  {\bm{f}}_m^{{*,r}^T}.
	\end{split}\label{eq:split weak}
\end{equation}

\begin{rmk}
Eq.~(\ref{eq:split weak}) is not equivalent to performing integration by parts on the continuous form in Eq.~(\ref{eq: continuous split}), and then discretizing.
\end{rmk}

%Although in the case of overintegration, $\bm{\Pi}_m^{{p}^T}\bm{\chi}(\bm{\xi}_v^r)^T\neq\bm{I}$, the above implementation is still valid if and only if the integration strength on the set of volume cubature nodes is at least $2p-1$.

%\begin{lemma}
%If and only if the integration strength is at least $2p-1$ on the set of volume cubature nodes, then $\bm{\chi}(\bm{\xi}_{v}^r)^T\bm{U}\bm{W}\frac{\partial\bm{\chi}(\bm{\xi}_{v}^r)}{\partial {\xi}}\hat{\bm{u}}_m(t)^T = \bm{\chi}(\bm{\xi}_{v}^r)^T\bm{U}\bm{\Pi}_m^{{p}^T}\bm{\chi}(\bm{\xi}_v^r)^T\bm{W}\frac{\partial\bm{\chi}(\bm{\xi}_{v}^r)}{\partial {\xi}}\hat{\bm{u}}_m(t)^T$.\label{lemma: P trans chi trans}
%\end{lemma}

%\begin{proof} The proof is in Appendix~\ref{sec:projection}.
%\end{proof}

%%%%%%%%%%%%%%%%%%%%%%%%%%%%%%%%%%%%%%%%%%%%%%%%%%%%%%%%%%%

%%%%%%%%%%%%%LEMMA
%%%%%%%%%%%%%%%%%%%%%%%%%%%%%%%%%%%%%%%%%%%%%%%%%%%%%%%%%%%%%
We will now demonstrate that the proposed ESFR split form is equivalent to the DG split form with additional volume and surface terms. For this purpose, the following lemma is necessary.

\begin{lemma}
%he additional filter term applied to the split form volume term is exactly equal to 
$(\bm{M}_m +\bm{K}_m)^{-1}= \bm{M}_m^{-1}-\frac{1}{1+c(2p+1)(p!c_p)^2}\bm{M}_m^{-1}\bm{K}_m\bm{M}_m^{-1}$ for linear elements.
\end{lemma}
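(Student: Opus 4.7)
The plan is to exploit the rank-one structure of $\bm{D}^p$ and apply the Sherman--Morrison formula. First, observe that $\bm{D}^p$ is rank one: it maps the $(p+1)$-dimensional space of polynomials of degree $\le p$ into the one-dimensional space of constants, since the $p$-th derivative of any polynomial of degree $p$ is constant. Hence there exist column vectors $\bm{a},\bm{b}$ with $\bm{D}^p=\bm{a}\bm{b}^T$, and therefore
\[
\bm{K}_m \;=\; c(\bm{D}^p)^T\bm{M}_m\bm{D}^p \;=\; c\bigl(\bm{a}^T\bm{M}_m\bm{a}\bigr)\,\bm{b}\bm{b}^T
\]
is a symmetric, rank-one matrix.

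Applying Sherman--Morrison to the rank-one perturbation of the invertible, symmetric positive definite $\bm{M}_m$ yields
\[
(\bm{M}_m+\bm{K}_m)^{-1} \;=\; \bm{M}_m^{-1} \;-\; \frac{\bm{M}_m^{-1}\bm{K}_m\bm{M}_m^{-1}}{1+\operatorname{tr}(\bm{M}_m^{-1}\bm{K}_m)},
\]
because the numerator $c(\bm{a}^T\bm{M}_m\bm{a})\,\bm{M}_m^{-1}\bm{b}\bm{b}^T\bm{M}_m^{-1}$ coincides with $\bm{M}_m^{-1}\bm{K}_m\bm{M}_m^{-1}$ and the denominator $1+c(\bm{a}^T\bm{M}_m\bm{a})\,\bm{b}^T\bm{M}_m^{-1}\bm{b}$ coincides with $1+\operatorname{tr}(\bm{M}_m^{-1}\bm{K}_m)$. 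The numerator already matches the claimed formula, so the entire argument reduces to showing $\operatorname{tr}(\bm{M}_m^{-1}\bm{K}_m)=c(2p+1)(p!c_p)^2$.

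Since this trace is invariant under change of polynomial basis, I would evaluate it in the orthonormal Legendre reference basis on $[-1,1]$, which is precisely the basis in which $c$ is defined (see the remark following Eq.~(\ref{eq:Km})). For linear elements the metric Jacobian $J_m^\Omega$ is constant, so $\bm{M}_m=J_m^\Omega\bm{I}$ and $\bm{K}_m=cJ_m^\Omega(\bm{D}^p)^T\bm{D}^p$. Using that the $p$-th derivative of the $p$-th normalized Legendre polynomial equals the constant $\sqrt{(2p+1)/2}\,p!c_p$, and representing a constant in the normalized basis via $1=\sqrt{2}\,\tilde P_0$, one obtains $\bm{D}^p=\sqrt{2p+1}\,p!c_p\,\bm{e}_0\bm{e}_p^T$, whence $\bm{K}_m=cJ_m^\Omega(2p+1)(p!c_p)^2\,\bm{e}_p\bm{e}_p^T$. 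The Jacobian cancels in $\operatorname{tr}(\bm{M}_m^{-1}\bm{K}_m)=c(2p+1)(p!c_p)^2$, delivering the stated denominator.

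The main obstacle is the normalization bookkeeping. The scalar $c$ is tied to the normalized Legendre basis whereas the lemma is stated in basis-free matrix form, so one must explicitly invoke basis-invariance of $\operatorname{tr}(\bm{M}_m^{-1}\bm{K}_m)$ before computing in the convenient basis. The linear-element hypothesis enters only at the last step: constancy of $J_m^\Omega$ is what makes $\bm{M}_m$ a scalar multiple of the identity in the orthonormal basis, allowing the Jacobian to cancel cleanly; for a curved element this simplification breaks and the denominator would acquire nontrivial $J_m^\Omega$ dependence.
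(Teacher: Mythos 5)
Your proof is correct and takes essentially the same route as the paper: both exploit the rank-one structure of $\bm{K}_m$ inherited from $\bm{D}^p$, apply the Sherman--Morrison formula, and obtain the scalar $c(2p+1)(p!\,c_p)^2$ from the $p$-th derivative of the $p$-th normalized Legendre polynomial. The only difference is organizational --- the paper first conjugates to the normalized Legendre reference basis via $\bm{T}$ and applies Sherman--Morrison there, whereas you apply it directly in the working basis and use basis-invariance of $\operatorname{tr}(\bm{M}_m^{-1}\bm{K}_m)$ to evaluate the denominator.
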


\begin{proof}
After factoring out the Jacobian dependence, consider transforming $(\bm{M}+\bm{K})^{-1}$ to a normalized Legendre reference basis, $\bm{\chi}_{ref}( {\xi}^r)$, by use of the transformation operator $\bm{T}=\bm{\Pi}_{ref}^P\bm{\chi}(\bm{\xi}_v^r)$, where $\bm{\Pi}_{ref}^P=\bm{M}_{ref}^{-1}\bm{\chi}_{ref}(\bm{\xi}_v^r)^T\bm{W}$ is the $\text{L}_2$ projection operator for a normalized Legendre reference basis.%from~\cite{zwanenburg_equivalence_2016},

\begin{equation}
    (\bm{M}+\bm{K} )^{-1} = \bm{T}^{-1}(\bm{M}_{ref} + \bm{K}_{ref})^{-1}\bm{T}^{-T}.
\end{equation}

To use the Sherman-Morrison formula, we consider $\bm{K}_{ref}=c\bm{r}^T\bm{s}$, where 
$\bm{r}=[0 \dots 0 \frac{\partial^p \chi_{ref,p}(\xi)}{\partial \xi^p}]$ and
$\bm{s}=[0 \dots 0 \frac{\partial^p \chi_{ref,p}(\xi)}{\partial \xi^p}\sum_{i=1}^{N_{vp}}W(\xi_{v,i}^r)]$, with $\chi_{ref,p}(\xi)= \sqrt{\frac{2p+1}{2}}\frac{(2p)!}{2^p(p!)^2}\xi^p+\dots+c_0=\sqrt{\frac{2p+1}{2}}c_p\xi^p+\dots +c_0$ is the $p^{\text{th}}$ order normalized Legendre polynomial. Thus, $\frac{\partial^p \chi_{ref,p}(\xi)}{\partial \xi^p} =\sqrt{\frac{2p+1}{2}}c_p p! $. Utilizing that the mass matrix of a normalized Legendre reference basis is an identity matrix, and the Sherman-Morrison formula,

\begin{equation}
\begin{split}
    (\bm{M}+\bm{K} )^{-1} = \bm{T}^{-1}\Big( \bm{I} - \frac{1}{1+c\bm{s}\bm{r}^T}\bm{K}_{ref}\Big)\bm{T}^{-T}\\
    = \bm{M}^{-1} - \frac{1}{1+c(2p+1)(p!c_p)^2}\bm{M}^{-1}\bm{K} \bm{M}^{-1}.
    \end{split}
\end{equation}

\noindent Including Jacobian dependence results in,

\begin{equation}
    (\bm{M}_m+\bm{K}_m)^{-1} 
    = \bm{M}_m^{-1} - \frac{1}{1+c(2p+1)(p!c_p)^2}\bm{M}_m^{-1}\bm{K}_m \bm{M}_m^{-1}.
\end{equation}

\end{proof}
%%%END OF LEMMA

Note that $\bm{K}_m \bm{M}_m^{-1}\bm{S}_\xi = 0$ for linear grids, provided that the flux is not projected to a higher order basis~\cite[Appendix A]{zwanenburg_equivalence_2016}. If the flux is not projected to a higher order basis, we can fully express Eq.~(\ref{eq:split strong}) as,

\begin{equation}
    \begin{split}
    \frac{d}{dt}\hat{\bm{u}}_m(t)^T
   = -\alpha \bm{M}_m^{-1}\bm{S}_{\xi}\hat{\bm{f}}_m^r(t)^T 
   -(1-\alpha)\bm{M}_m^{-1}\bm{\chi}(\bm{\xi}_{v}^r)^T\bm{U}\bm{W}\frac{\partial\bm{\chi}(\bm{\xi}_{v}^r)}{\partial {\xi}}\hat{\bm{u}}_m(t)^T\\
   + \frac{(1-\alpha)}{1+c(2p+1)(p!c_p)^2}\bm{M}_m^{-1}\bm{K}_m\bm{M}_m^{-1} \bm{\chi}(\bm{\xi}_v^r)^T\bm{U}\bm{W}
  \frac{\partial\bm{\chi}(\bm{\xi}_{v}^r)}{\partial {\xi}} \hat{\bm{u}}_m(t)^T\\
	-(\bm{M}_m+\bm{K}_m) ^{-1}\sum_{f=1}^{N_f}
	\bm{\chi}(\bm{\xi}_{f}^r)^T \bm{W}_{f}\diag(\hat{\bm{n}}^r) \Big(  \bm{ {f}}_m^{{*,r}^T }-\alpha\bm{\chi}(\bm{\xi}_{f}^r)\hat{\bm{f}}_m^{{r}}(t)^T-(1-\alpha) { \bm{f}}_{f}^{{r}^T}\Big) .
	\end{split}
\end{equation}

\noindent We numerically demonstrate that this additional term is design order by obtaining the correct orders in Section~\ref{sec: num results}. %It is to be noted that this form corresponds to the ESFR correction functions built with a normalized Legendre reference basis rather than a Legendre reference basis used in~\cite{huynh_reconstruction_2009,vincent_new_2011, allaneau_connections_2011}. Thus the parameter $c$ changes accordingly~\cite{Cicchino2020NewNorm}.

%%%%%%%%%%%%%%%%%%%%%%%%%%%%%%%%%%%%%%5

%%%%%%%           CONSERVATION

%%%%%%%%%%%%%%%%%%%%%%%%%%%%%%%%%%%%%%%
\subsubsection{Discrete Conservation}\label{sec:conservation}

Following the formulation presented in~\cite{gassner2013skew,ranocha2016summation} for conservation, we demonstrate here both local and global conservation using a quadrature of exact integration of at least $2p-1$,

\begin{equation}
\begin{split}
   \text{Continuous: } \int_{\bm{\Omega}_r}  (\bm{\chi}( {\xi^r})\hat{\bm{1}}^T)^T  {J}_m^\Omega (\bm{\chi}( {\xi}^r)\frac{d}{dt}\hat{\bm{u}}_m(t)^T)
    + c\Big(\frac{\partial^p}{\partial \xi^p}(\bm{\chi}( {\xi^r})\hat{\bm{1}}^T)
\Big)^T {J}_m^\Omega\Big(  \frac{\partial^p}{\partial \xi^p}(\bm{\chi}( {\xi^r})\frac{d}{dt}\hat{\bm{u}}_m(t)^T )
\Big)
    d\bm{\Omega}_r\\
 \implies \text{Discrete: }   \hat{\bm{1}}(\bm{M}_m+\bm{K}_m)\frac{d}{dt}\hat{\bm{u}}_m(t)^T,
    \end{split}
\end{equation}

\noindent where $\bm{1}=[1,\dots,1]=\Big(\bm{\chi}(\bm{\xi}_v^r)\hat{\bm{1}}^T\Big)^T$. First, we substitute Eq.~(\ref{eq:split strong}) to show local and global conservation for the proposed split strong form.

\begin{equation}
\begin{split}
    %\frac{d}{dt}\int_{\bm{\Omega}_r}J_m^\Omega(\bm{\chi}(\bm{\xi}_v^r)\hat{\bm{u}}_m(t)^T)d\bm{\Omega}_r
   \hat{\bm{1}}(\bm{M}_m+\bm{K}_m) \frac{d}{dt}\hat{\bm{u}}_m(t)^T=
   -\alpha \hat{\bm{1}} \bm{S}_{\xi} \hat{\bm{f}}_m^r(t)^T
 %  -\alpha \hat{\bm{1}} \bm{S}_{\xi}\bm{\Pi}_m^{{p}}(\frac{1}{2}\bm{U}(\bm{\chi}(\bm{\xi}_v^r)\hat{\bm{u}}_m(t)^T)) \\
   -(1-\alpha)\hat{\bm{1}}\bm{\chi}(\bm{\xi}_v^r)^T\bm{U}\bm{W}\frac{\partial\bm{\chi}(\bm{\xi}_{v}^r)}{\partial {\xi}}\hat{\bm{u}}_m(t)^T\\
	-\hat{\bm{1}}\sum_{f=1}^{N_f}
	\bm{\chi}(\bm{\xi}_{f}^r)^T \bm{W}_{f}\diag(\hat{\bm{n}}^r) \Big(  \bm{{f}}_m^{{*,r}^T}-\alpha\bm{\chi}(\bm{\xi}_{f}^r)\hat{\bm{f}}_m^{{r}}(t)^T-(1-\alpha){\bm{f}}_{f}^{{r}^T}\Big) .
	\end{split}
\end{equation}

\noindent Discretely integrating the first term by parts yields,

\begin{equation}
\begin{split}
  %  \frac{d}{dt}\int_{\bm{\Omega}_r}J_m^\Omega(\bm{\chi}(\bm{\xi}_v^r)\hat{\bm{u}}_m(t)^T)d\bm{\Omega}_r
    \hat{\bm{1}}(\bm{M}_m+\bm{K}_m)\frac{d}{dt}\hat{\bm{u}}_m(t)^T=
    \alpha \hat{\bm{1}} \bm{S}_{\xi}^T \hat{\bm{f}}_m^r(t)^T
    %\alpha \hat{\bm{1}} \bm{S}_{\xi}^T\bm{\Pi}_m^{{p}}(\frac{1}{2}\bm{U}(\bm{\chi}(\bm{\xi}_v^r)\hat{\bm{u}}_m(t)^T)) \\
   -(1-\alpha)\hat{\bm{1}}\bm{\chi}(\bm{\xi}_v^r)^T\bm{U}\bm{W}\frac{\partial\bm{\chi}(\bm{\xi}_{v}^r)}{\partial {\xi}}\hat{\bm{u}}_m(t)^T\\
	-\hat{\bm{1}}\sum_{f=1}^{N_f}
	\bm{\chi}(\bm{\xi}_{f}^r)^T \bm{W}_{f}\diag(\hat{\bm{n}}^r) \Big(  \bm{{f}}_m^{{*,r}^T}-(1-\alpha){\bm{f}}_{f}^{{r}^T}\Big) .
	\end{split}\label{eq:strong loc cons deriv}
\end{equation}

%Due to the integration by parts, the volume flux interpolated to the face cancelled off. 
\noindent Note that for the first volume term,

\begin{equation}
\begin{split}
    \hat{\bm{1}} \bm{S}_{\xi}^T=\Big(\bm{S}_{\xi}\hat{\bm{1}}^T \Big)^T 
    =\Big( \bm{\chi}(\bm{\xi}_{v}^r)^T\bm{W}\frac{\partial\bm{\chi}(\bm{\xi}_{v}^r)}{\partial  {\xi}} \hat{\bm{1}}^T \Big)^T = 
    \Big( \bm{\chi}(\bm{\xi}_{v}^r)^T\bm{W}\frac{\partial}{\partial  {\xi}} {\bm{1}}^T \Big)^T=\bm{0},
    \end{split}
\end{equation}

%%%%%%%%%%%%%%%%%%%%%%%%%%%%%%%%%%%%%%%%%%%%%%%%%%%%%%%5
%%%%%%%%5 COMMENTED OUT
%%%%%%%%%%%%%%%%%%%%%%%%%%%%%%%%%%5%%%%%%%%%%%%%%%%%%%%%%%%%%%%%%%5
\iffalse
\noindent is the derivative of a constant, hence reduces to zero.

Eq.~(\ref{eq:strong loc cons deriv}) then becomes,

\begin{equation}
\begin{split}
  %  \frac{d}{dt}\int_{\bm{\Omega}_r}J_m^\Omega(\bm{\chi}(\bm{\xi}_v^r)\hat{\bm{u}}_m(t)^T)d\bm{\Omega}_r
    \hat{\bm{1}}(\bm{M}_m+\bm{K}_m)\frac{d}{dt}\hat{\bm{u}}_m(t)^T=-(1-\alpha)\hat{\bm{1}}\bm{\chi}(\bm{\xi}_v^r)^T\bm{U}\bm{W}\frac{\partial\bm{\chi}(\bm{\xi}_{v}^r)}{\partial {\xi}}\hat{\bm{u}}_m(t)^T \\
	-\hat{\bm{1}}\sum_{f=1}^{N_f}
	\bm{\chi}(\bm{\xi}_{f}^r)^T \bm{W}_{f}\diag(\hat{\bm{n}}^r) \Big(  \bm{{f}}_m^{{*,r}^T}-(1-\alpha){\bm{f}}_{f}^{{r}^T}\Big).
	\end{split}\label{eq:local cons 1}
\end{equation}

\fi

%%%%%%%%%%%%%%%%%%%%%%%%%%%%%%%%%%%%%%%%%%%%%%%%%%%%%%%5
%%%%%%%%5 END OF COMMENTED OUT
%%%%%%%%%%%%%%%%%%%%%%%%%%%%%%%%%%5%%%%%%%%%%%%%%%%%%%%%%%%%%%%%%%5

\noindent is the derivative of a constant, and hence eliminated. Using $\bm{W}\frac{\partial\bm{\chi}(\bm{\xi}_{v}^r)}{\partial {\xi}} = \bm{\Pi}_m^{{p}^T}\bm{S}_\xi$ %introduce $\bm{\Pi}_m^{{p}^T}\bm{\chi}(\bm{\xi_{v}^r})^T$ from Lemma~\ref{lemma: P trans chi trans} to discretely integrate the volume term by parts,
on the second volume term and discretely integrating by parts we obtain,

\begin{equation}
\begin{split}
   % \frac{d}{dt}\int_{\bm{\Omega}_r}J_m^\Omega(\bm{\chi}(\bm{\xi}_v^r)\hat{\bm{u}}_m(t)^T)d\bm{\Omega}_r
    \hat{\bm{1}}(\bm{M}_m+\bm{K}_m)\frac{d}{dt}\hat{\bm{u}}_m(t)^T=(1-\alpha)\hat{\bm{1}}\bm{\chi}(\bm{\xi}_{v}^r)^T\bm{U}\bm{\Pi}_m^{{p}^T}\bm{S}_{\xi}^T\hat{\bm{u}}_m(t)^T\\
    -(1-\alpha)\hat{\bm{1}}	\bm{\chi}(\bm{\xi}_{v}^r)^T\bm{U}\bm{\Pi}_m^{{p}^T}
    \sum_{f=1}^{N_f}
    \bm{\chi}( \bm{\xi}_{f}^r)^T \bm{W}_{f}\diag(\hat{\bm{n}}^r) \bm{\chi}( \bm{\xi}_{f}^r)\hat{\bm{u}}_m(t)^T\\
	-\hat{\bm{1}}\sum_{f=1}^{N_f}
	\bm{\chi}(\bm{\xi}_{f}^r)^T \bm{W}_{f}\diag(\hat{\bm{n}}^r) \Big(  \bm{{f}}_m^{{*,r}^T}-(1-\alpha){\bm{f}}_{f}^{{r}^T}\Big).
	\end{split}\label{eq:local cons 2}
\end{equation}

\noindent Adding a half of Eq.~(\ref{eq:strong loc cons deriv}) and~(\ref{eq:local cons 2}), and noticing that

\begin{equation}
\begin{split}
\hat{\bm{1}}\bm{\chi}(\bm{\xi}_{v}^r)^T\bm{U}\bm{\Pi}_m^{{p}^T}\bm{S}_{\xi}^T\hat{\bm{u}}_m(t)^T
=(\bm{\chi}(\bm{\xi}_{v}^r)\hat{\bm{1}}^T)^T\bm{U}\bm{\Pi}_m^{{p}^T}\bm{S}_{\xi}^T\hat{\bm{u}}_m(t)^T\\
= (\bm{\Pi}_m^{{p}}\bm{u}_m^T)^T\bm{S}_{\xi}^T\hat{\bm{u}}_m(t)^T
=\hat{\bm{u}}_m(t)\bm{S}_{\xi}^T\hat{\bm{u}}_m(t)^T
=\Big(\hat{\bm{u}}_m(t)\bm{S}_{\xi}\hat{\bm{u}}_m(t)^T\Big)^T
\end{split}
\end{equation}

\noindent is a scalar, we can drop the transpose and the volume terms cancel and hence,

\begin{equation}
    \begin{split}
    %\frac{d}{dt}\int_{\bm{\Omega}_r}J_m^\Omega(\bm{\chi}(\bm{\xi}_v^r)\hat{\bm{u}}_m(t)^T)\hat{\bm{u}}_m(t)^Td\bm{\Omega}_r
    \hat{\bm{1}}(\bm{M}_m+\bm{K}_m)\frac{d}{dt}\hat{\bm{u}}_m(t)^T=
    -\frac{(1-\alpha)}{2}\sum_{f=1}^{N_f}\hat{\bm{u}}_m(t)\bm{\chi}(\bm{\xi}_{f}^r)^T \bm{W}_{f}\diag(\hat{\bm{n}}^r) \bm{\chi}(\bm{\xi}_{f}^r)\hat{\bm{u}}_m(t)^T \\
	-\hat{\bm{1}}\sum_{f=1}^{N_f}
	\bm{\chi}(\bm{\xi}_{f}^r)^T \bm{W}_{f}\diag(\hat{\bm{n}}^r) \Big(  \bm{{f}}_m^{{*,r}^T}-(1-\alpha){\bm{f}}_{f}^{{r}^T}\Big).
	\end{split}\label{eq: temp for now}
\end{equation}

Finally, if we consider the first term in Eq.~(\ref{eq: temp for now}) evaluated at a single facet cubature node $k$,

\begin{equation}
\begin{split}
    \frac{1}{2}\hat{\bm{u}}_m(t) \bm{\chi}({\xi}_{f,k}^r)^T {W}_{f,k}\hat{{n}}^r_{f,k} \bm{\chi}({\xi}_{f,k}^r)\hat{\bm{u}}_m(t)^T =
    \frac{1}{2}\Big(u_{f,k} W_{f,k}\hat{n}_{f,k}^r u_{f,k} \Big)\\
    = 1 W_{f,k} \hat{n}_{f,k}^r (\frac{1}{2} u_{f,k}^2)
    =\hat{\bm{1}}\bm{\chi}({\xi_{f,k}^r})^T {W}_{f,k}\hat{{n}}^r_{f,k} {{{f}}_{f,k}^{{r}}},
    \end{split}
\end{equation}

\noindent then we are left with,

\begin{equation}
    \hat{\bm{1}}(\bm{M}_m+\bm{K}_m)\frac{d}{dt}\hat{\bm{u}}_m(t)^T=-\hat{\bm{1}}\sum_{f=1}^{N_f}
	\bm{\chi}(\bm{\xi}_{f}^r)^T \bm{W}_{f}\diag(\hat{\bm{n}}^r)   \bm{{f}}_m^{{*,r}^T},\label{eq:strong cons}
\end{equation}

\noindent which concludes the proof for the strong ESFR split form's local and global conservation. %As expected, Eq.~(\ref{eq:strong cons}) is equivalent to Eq. 105 in~\cite{ranocha2016summation} and Eq. 3.22 in~\cite{gassner2013skew} since our scheme recovers theirs with the nodes collocated and a value of $c_{DG}$.

\iffalse
Applying the same methodology for the weak ESFR split form,

\begin{equation}
    \begin{split}
    \hat{\bm{1}}(\bm{M}_m+\bm{K}_m)\frac{d}{dt}\hat{\bm{u}}_m(t)^T=\alpha \hat{\bm{1}} \bm{S}_{\xi}^T\hat{\bm{f}}_m^r(t)^T
   +(1-\alpha)\hat{\bm{1}}\bm{\chi}(\bm{\xi}_{v}^r)^T\bm{U}\bm{\Pi}_m^{{p}^T}\bm{S}_{\xi}^T\hat{\bm{u}}_m(t)^T\\
	-\hat{\bm{1}}
	\bm{\chi}( {\xi_{f,k}^r})^T {W}_{f,k}[\hat{{n}}^r\cdot  {{f}}_m^{{*,r}}].
	\end{split}
\end{equation}

Observing that the first volume term is zero, then integrating the second volume term by parts yields,

\begin{equation}
    \begin{split}
   % \frac{d}{dt}\int_{\bm{\Omega}_r}J_m^\Omega\hat{\bm{u}}_m(t)^Td\bm{\Omega}_r
    \hat{\bm{1}}(\bm{M}_m+\bm{K}_m)\frac{d}{dt}\hat{\bm{u}}_m(t)^T= -(1-\alpha)\hat{\bm{1}}\bm{\chi}(\bm{\xi}_{v}^r)^T\bm{U}\bm{\Pi}_m^{{p}^T}\bm{S}_{\xi}\hat{\bm{u}}_m(t)^T\\
	-\hat{\bm{1}}
	\bm{\chi}( {\xi_{f,k}^r})^T {W}_{f,k}[\hat{{n}}^r\cdot \Big({{f}}_m^{{*,r}}-(1-\alpha){{f}}_f^{{r}}\Big)].
	\end{split}\label{eq:cons weak prel}
\end{equation}

Noticing that Eq.~(\ref{eq:cons weak prel}) is equivalent to Eq.~(\ref{eq:local cons 1}), the same result follows, that the weak ESFR split form is locally and globally conservative.

\fi

%%%%%%%%%%%%%%%%%%%%%%%%%%%%%%%%%%%%%%%%%%%%%%%%%%%%

%%%%%%%%%                 STABILITY

%%%%%%%%%%%%%%%%%%%%%%%%%%%%%%%%%%%%%%%%%%%%%%%%%%
\subsubsection{Discrete Energy Stability}\label{sec:stability proof}

We consider the broken Sobolev-norm in Eq.~(\ref{eq:init energy ESFR}) to demonstrate nonlinear stability. Analyzing the uncollocated ESFR split strong form, we insert Eq.~(\ref{eq:split strong}) into the energy balance, and notice that $(\bm{M}_m+\bm{K}_m)(\bm{M}_m+\bm{K}_m)^{-1}=\bm{I}$, the identity matrix. That is, when incorporating the ESFR filter on the nonlinear volume integral, the dense ESFR norm cancels off, and we obtain,

\begin{equation}
\begin{split}
    \frac{1}{2}\frac{d}{dt}\|\bm{u}\|_{M_m+K_m}^2=
    -\alpha\hat{\bm{u}}_m(t)\bm{S}_{\xi}\hat{\bm{f}}_m^r(t)^T
    -(1-\alpha)\hat{\bm{u}}_m(t)\bm{\chi}(\bm{\xi}_{v}^r)^T\bm{U}\bm{W}
    \frac{\partial \bm{\chi}(\bm{\xi}_{v}^r)}{\partial  {\xi}}\hat{\bm{u}}_m(t)^T\\
    -\hat{\bm{u}}_m(t)\sum_{f=1}^{N_f}
	\bm{\chi}(\bm{\xi}_{f}^r)^T \bm{W}_{f}\diag(\hat{\bm{n}}^r) \Big( {\bm{f}}_m^{{*,r}^T}-\alpha	\bm{\chi}(\bm{\xi}_{f}^r)\hat{\bm{f}}_m^{{r}}(t)^T-(1-\alpha){\bm{f}}_{f}^{{r}^T} \Big).
	\end{split}\label{eq:init strong energy}
\end{equation}

\noindent Using discrete integration by parts, Eq.~(\ref{eq: integration by parts}), on the first volume term results in,

\begin{equation}
\begin{split}
   \frac{1}{2} \frac{d}{dt}\|\bm{u}\|_{M_m+K_m}^2=
    \alpha\hat{\bm{u}}_m(t)\bm{S}_{\xi}^T\hat{\bm{f}}_m^r(t)^T
    -(1-\alpha)\hat{\bm{u}}_m(t)\bm{\chi}(\bm{\xi}_v^r)^T\bm{U}\bm{W}
    \frac{\partial \bm{\chi}(\bm{\xi}_v^r)}{\partial  {\xi}}\hat{\bm{u}}_m(t)^T\\
    -\hat{\bm{u}}_m(t)\sum_{f=1}^{N_f}
	\bm{\chi}(\bm{\xi}_{f}^r)^T \bm{W}_{f}\diag(\hat{\bm{n}}^r) ({\bm{f}}_m^{{*,r}^T}-(1-\alpha) { \bm{f}}_f^{{r}^T}).
    \end{split}
\end{equation}

%Next, we pull the transpose out of the first volume term, and introduce $\bm{\Pi}_m^{{p}^T}\bm{\chi}(\bm{\xi}_{v}^r)^T$ from Lemma~\ref{lemma: P trans chi trans} in between $\bm{U}$ and $\bm{W}$ in the second volume term,

\noindent Note that $\hat{\bm{u}}_m(t)\bm{S}_{\xi}^T\hat{\bm{f}}_m^r(t)^T = \hat{\bm{f}}_m^r(t)\bm{S}_\xi\hat{\bm{u}}_m(t)^T = \Big(\bm{\chi}(\bm{\xi}_v^r) \hat{\bm{u}}_m(t)^T\Big)^T\bm{U}\bm{\Pi}_m^{{p}^T}\bm{S}_{\xi}\hat{\bm{u}}_m(t)^T$ is a scalar and thus,

\begin{equation}
\begin{split}
   \frac{1}{2} \frac{d}{dt}\|\bm{u}\|_{M_m+K_m}^2=
    \frac{1}{2}\alpha\Big(\bm{\chi}(\bm{\xi}_v^r) \hat{\bm{u}}_m(t)^T\Big)^T\bm{U}\bm{\Pi}_m^{{p}^T}\bm{S}_{\xi}\hat{\bm{u}}_m(t)^T\\
    -(1-\alpha)\hat{\bm{u}}_m(t)\bm{\chi}(\bm{\xi}_v^r)^T\bm{U}\bm{\Pi}_m^{{p}^T}\bm{S}_{\xi}\hat{\bm{u}}_m(t)^T\\
    -\hat{\bm{u}}_m(t)\sum_{f=1}^{N_f}
	\bm{\chi}(\bm{\xi}_{f}^r)^T \bm{W}_{f}\diag(\hat{\bm{n}}^r) ({\bm{f}}_m^{{*,r}^T}-(1-\alpha){\bm{f}}_f^{{r}^T}).
    \end{split}
\end{equation}

%Noticing that $\Big[\Big(\bm{\chi}(\bm{\xi}_v^r) \hat{\bm{u}}_m(t)\Big)^T\bm{U}\bm{\Pi}_m^{{p}^T}\bm{S}_{\xi}(\hat{\bm{u}}_m(t)^T)\Big]^T$ is a scalar term, since it's energy, we can drop the transpose. 
\noindent Choosing $\alpha=\frac{2}{3}$ has the volume terms vanish, and we are left with,

\begin{equation}
   \frac{1}{2} \frac{d}{dt}\|\bm{u}\|_{M_m+K_m}^2=
    -\hat{\bm{u}}_m(t)\sum_{f=1}^{N_f}
	\bm{\chi}(\bm{\xi}_{f}^r)^T \bm{W}_{f}\diag(\hat{\bm{n}}^r)( { \bm{f}}_m^{{*,r}^T}-\frac{1}{3}{\bm{f}}_f^{{r}^T}).\label{eq:energy strong}
\end{equation}

%Note that when $c_{DG}=0$ the proposed ESFR scheme recovers the schemes of Chan~\cite{chan2018discretely}. %the ESFR filter is applied on both the volume and surface integrals, the energy balance is identical to the result of~\cite{gassner2013skew,ranocha2016summation,chan2018discretely}, where we recover their schemes when $c_{DG}=0$ is used. %in the case that the nodes are collocated and with $c_{DG}=0$, our scheme recovers theirs as shown in Appendix~\ref{sec:equiv}. 
Since our energy balance Eq.~(\ref{eq:energy strong}) only incorporates terms evaluated on the surface, we reuse observations from Gassner~\cite{gassner2013skew}. After considering an edge term, assuming all interior (left) cells' outward pointing normal is 1, letting $w_0$ represent the solution at the right of the edge, and $v_p$ represent the solution at the left of the face, the surface contribution for Eq.~(\ref{eq:energy strong}) is,

\begin{equation}
    \text{surface contribution} = (w_0-v_p)^2\Big(\frac{1}{12}(w_0-v_p) -\lambda \Big),
\end{equation}

\noindent where the following numerical flux is used,

\begin{equation}
    { {f}}_m^{{*,r}}=\frac{1}{2}\Big(\frac{w_0^2}{2}+\frac{v_p^2}{2} \Big) - \lambda(w_0-v_p).\label{eq:num flux}
\end{equation}

\noindent This directly leads to the stability criterion of,

\begin{equation}
    \lambda \geq \frac{1}{12}(w_0-v_p).\label{eq: stab lamba}
\end{equation}

\noindent The local Lax-Freidrichs numerical flux ensures energy and entropy stability, since,

\begin{equation}
    \lambda_{LLF}=\frac{1}{2}\text{max}(|w_0|,|v_p|)\geq \frac{1}{12}(w_0-v_p).\label{eq: LLF lamba}
\end{equation}

\section{Numerical Results}\label{sec: num results}

In this section, we use the open-source Parallel High-order Library for PDEs (\texttt{PHiLiP})~\cite{shi2021full} and consider similar test cases as used in~\cite{gassner2013skew} and~\cite{ranocha2016summation},

\begin{equation}
\begin{split}
    \frac{\partial}{\partial t}u+\frac{\partial}{\partial x}(\frac{u^2}{2})=q(x,t),\: x\in[0,2],
  % (a)  u(x,0)=\sin(\pi x)+0.01,\text{  }x\in[0,2],   \\
   %(b)  u(x,0)=\cos(\pi x),\text{  }x\in[0,2], 
\end{split}
\end{equation}

\noindent with periodic boundary conditions. Using values of $c_{DG}$, $c_+$, and $c_{10^4}=10^4$, we first demonstrate that our proposed strong ESFR split form satisfies the energy/entropy stability criteria derived in Sec.~\ref{sec:split forms} on collocated GLL nodes, uncollocated GL nodes, and uncollocated GL nodes with overintegration; with $q(x,t)=0$ and $u(x,0)=\sin(\pi x)+0.01$. Our basis functions are Lagrange polynomials constructed on GLL nodes. Next, we verify that we observe the correct orders of accuracy for the strong ESFR split form by using $q(x,t)=\pi\sin{(\pi(x-t))}(1-\cos{(\pi(x-t))})$, $u(x,0)=\cos(\pi x)$ and $u_{\text{exact}}(x,t)=\cos{(\pi(x-t))}$. We utilize the generality of the scheme by testing both uncollocated $N_{vp}=p+1$ and an uncollocated overintegration scheme on $N_{vp}=p+3$; both on GL nodes.

%the same source term presented in ~\cite{gassner2013skew} as $q(x,t)=\pi\cos{(\pi(x-t))}(-0.99+\sin{(\pi(x-t))})$, giving an exact solution of $u_{\text{exact}}(x,t)=\sin{(\pi(x-t))}+0.01$.

Since our proposed splitting includes the additional filtering of $(\bm{M}_m+\bm{K}_m)^{-1}$ on the second volume term, which does not occur in the classical ESFR scheme, we verify that it is needed for stability and that it does not effect the order of accuracy. For readability, \enquote{Cons. DG} refers to the conservative DG scheme Eq.~(\ref{eq:dg strong}), \enquote{ESFR Split} refers to our proposed splitting in Eq.~(\ref{eq:split strong}), and \enquote{Classical ESFR Split} refers to the strong split form with a classical ESFR implementation, where $(\bm{M}_m+\bm{K}_m)^{-1}$ is only applied on the face terms. All schemes were conservative on the order of $1\times 10^{-16}$.%Our proposed ESFR split form, for all values of $c$, Eq.~(\ref{eq:split strong}), Cons. DG, and Classical ESFR Split with $c\neq0$ were all conservative on the order of $1\times 10^{-16}$.%which is not the same as Eq.~(\ref{eq:split strong}) since it does not include the extra filter term on the split form.

\subsection{Energy Verification}

%We use a modal Lagrange basis built on the Gauss-Lobatto-Legendre nodes. 
For the collocated results, we integrate the solution and fluxes on Gauss-Lobatto-Legendre quadrature nodes. For the uncollocated results, we integrate the solution and fluxes on Gauss-Legendre quadrature nodes. The solution is integrated in time using RK4 with a timestep of $\Delta t =1e-4$ until a final time of $t_f=3$ s, and the grid is partitioned into $M=8$ uniform elements. \enquote{ECON} refers to the energy conserving numerical flux (the equality in Eq.~(\ref{eq: stab lamba})) and \enquote{LF} refers to the Lax-Friedrichs numerical flux, using the value from Eq.~(\ref{eq: LLF lamba}). Tables~\ref{tab:coll}, ~\ref{tab: uncoll}, and~\ref{tab: overint} present the energy results. % We normalize all energy values with the energy at time $t=0$ s.

 \begin{table}[t]
\resizebox{\textwidth}{!}{
%\begin{center} 
 \begin{tabular}{c c c c }
Scheme &  Flux & Energy Conserved $\mathcal{O}$(1e-12) & Energy Monotonically Decrease \\ \hline 
Cons. DG & ECON & No & No \\ \hline 
Cons. DG & LF & No & No \\ \hline 
EFSR Split $c_{DG}$ \footnotemark[1]& ECON& \textbf{Yes} & \textbf{Yes} \\\hline 
EFSR Split $c_{DG}$ \footnotemark[1]& LF & No & \textbf{Yes} \\ \hline 
EFSR Split $c_{+}$ & ECON  & \textbf{Yes} & \textbf{Yes} \\ \hline 
EFSR Split $c_{+}$ & LF & No &\textbf{Yes} \\ \hline 
EFSR Split $c_{10^4}$ & ECON & \textbf{Yes} & \textbf{Yes} \\ \hline 
EFSR Split $c_{10^4}$ & LF & No & \textbf{Yes}\\ \hline 
EFSR Classical Split $c_{+}$ & ECON  & No & No \\ \hline 
EFSR Classical Split $c_{+}$ & LF & No & No \\  \hline 
EFSR Classical Split $c_{HU}$ \footnotemark[1]& ECON  & No & No \\ \hline 
EFSR Classical Split $c_{HU}$ \footnotemark[1] & LF & No & No \\  \hline 
ESFR Classical Split $c_{HU}$ Lumped-Lobatto\footnotemark[1] & ECON & \textbf{Yes}&\textbf{Yes}\\\hline 
ESFR Classical Split $c_{HU}$ Lumped-Lobatto \footnotemark[1]& LF &No & \textbf{Yes}\\
\end{tabular} 
 %\end{center} 
 }
 \caption{Energy Results $p=4$,$5$ Collocated Schemes $N_{vp}=p+1$} \label{tab:coll}
 \end{table} 
 
 \footnotetext[1]{Note that the $\text{g}_2$ lumped-Lobatto scheme presented by Huynh~\cite{huynh_flux_2007}, and used by Abe \textit{et al}.~\cite{abe2018stable}, is equivalent to a collocated DG scheme on GLL nodes and not equivalent to using $c_{HU}$ with a collocated $\bm{K}_m$ operator~\cite{de2014connections}.}%; \textit{i.e.} the uncollocated $\bm{M}_m+\bm{K}_{m,HU} = \bm{M}_{m,GLL}$.}
 %This is not equivalent to lumping the $\bm{K}$ operator alike the $\text{g}_2$ lumped-Lobatto scheme in Abe \textit{et al}.~\cite{abe2018stable} since the $p^{th}$ derivative of a collocated nodal Lagrange basis is not lumped on the boundary node.}
 
%%%%%%%%%%%%%%5%%%%%%%%%%%%%%%%5

%%%%%5 To have tables nicely layed out
%%%%%%%%%%%%%%%%%%%%%%%%%5
%\iffalse

 \begin{table}[t]
\resizebox{\textwidth}{!}{
%\begin{center} 
 \begin{tabular}{c c c c }
Scheme &  Flux & Energy Conserved $\mathcal{O}$(1e-12) & Energy Monotonically Decrease \\ \hline 
Cons. DG & ECON & No & No \\ \hline 
Cons. DG & LF & No & No \\ \hline 
EFSR Split $c_{DG}$ & ECON & \textbf{Yes} & \textbf{Yes} \\ \hline 
EFSR Split $c_{DG}$ & LF & No & \textbf{Yes} \\ \hline 
EFSR Split $c_{+}$ & ECON  & \textbf{Yes} & \textbf{Yes} \\ \hline 
EFSR Split $c_{+}$ & LF & No & \textbf{Yes} \\ \hline 
EFSR Split $c_{10^4}$ & ECON & \textbf{Yes} & \textbf{Yes} \\ \hline 
EFSR Split $c_{10^4}$ & LF & No &\textbf{Yes} \\ \hline 
EFSR Classical Split $c_{+}$ & ECON  & No & No \\ \hline 
EFSR Classical Split $c_{+}$ & LF & No & No \\\hline 
EFSR Classical Split $c_{HU}$ \footnotemark[1]& ECON  & No & No \\ \hline 
EFSR Classical Split $c_{HU}$ \footnotemark[1] & LF & No & No \\  \hline 
ESFR Classical Split $c_{HU}$ Lumped-Lobatto \footnotemark[2]& ECON & N/A & N/A\\\hline 
ESFR Classical Split $c_{HU}$ Lumped-Lobatto \footnotemark[2]& LF &N/A & N/A\\
\end{tabular} 
 %\end{center} 
 }
 \caption{Energy Results $p=4$,$5$ Uncollocated Schemes $N_{vp}=p+1$}  \label{tab: uncoll}
 \end{table}

  \footnotetext[2]{N/A refers to \enquote{Not Available}. The $\text{g}_2$ lumped-Lobatto scheme cannot be run on uncollocated volume nodes since it would not make sense lumping the mode on the boundary.}

  \begin{table}[hbt!]
\resizebox{\textwidth}{!}{
%\begin{center} 
 \begin{tabular}{c c c c }
Scheme &  Flux & Energy Conserved $\mathcal{O}$(1e-12) & Energy Monotonically Decrease \\ \hline 
Cons. DG & ECON & No & No \\ \hline 
Cons. DG & LF & No & No \\ \hline 
EFSR Split $c_{DG}$ & ECON & \textbf{Yes} & \textbf{Yes} \\ \hline 
EFSR Split $c_{DG}$ & LF & No & \textbf{Yes} \\ \hline 
EFSR Split $c_{+}$ & ECON  & \textbf{Yes} & \textbf{Yes} \\ \hline 
EFSR Split $c_{+}$ & LF & No & \textbf{Yes} \\ \hline 
EFSR Split $c_{10^4}$ & ECON & \textbf{Yes}& \textbf{Yes} \\ \hline 
EFSR Split $c_{10^4}$ & LF & No & \textbf{Yes} \\ \hline 
EFSR Classical Split $c_{+}$ & ECON  & No & No \\ \hline 
EFSR Classical Split $c_{+}$ & LF & No & No \\\hline 
EFSR Classical Split $c_{HU}$ \footnotemark[1]& ECON  & No & No \\ \hline 
EFSR Classical Split $c_{HU}$ \footnotemark[1] & LF & No & No \\  \hline 
ESFR Classical Split $c_{HU}$ Lumped-Lobatto \footnotemark[2]& ECON & N/A & N/A\\\hline 
ESFR Classical Split $c_{HU}$ Lumped-Lobatto \footnotemark[2]& LF &N/A & N/A\\
\end{tabular} 
 %\end{center} 
 }
 \caption{Energy Results $p=4$,$5$ Uncollocated Schemes Overintegrated $N_{vp}=p+3$}  \label{tab: overint}
 \end{table}

%\fi

\subsection{Orders of Accuracy (OOA)}

To compute the $\text{L}_2$-error, an overintegration of $p+10$ was used in calculating the error to provide a sufficient strength for the purpose of accuracy. 

\begin{equation}
    \text{L}_2 -\text{error} = \sqrt{\sum_{m=1}^{M}{\int_\Omega{(u_m-u)^2d\Omega}}}
    =\sqrt{\sum_{m=1}^{M}(\mathbf{u}_m^T-\mathbf{u}_{exact}^T)\mathbf{W}\mathbf{J}_m(\mathbf{u}_m-\mathbf{u}_{exact})}.
\end{equation}

The $\text{L}_2$-errors are shown for the test case described above for Cons. DG and strong ESFR Split with $c_{DG}$ to provide a direct comparison of the influence of splitting the volume and face terms on the OOA. Also, strong ESFR Split with $c_+$, and strong ESFR Classical Split with $c_+$ are tested to give a direct comparison of the influence of $(\bm{M}_m+\bm{K}_m)^{-1}$ being applied on the non-conservative volume term for accuracy. We use a $t_f=1$ s, $\Delta t =1\times 10^{-4}$, and a Lax-Friedrichs numerical flux for the OOA test. We demonstrate the OOA for $p=4$ uncollocated $N_{vp}=p+1$ in table~\ref{tab:uncoll p4} and uncollocated overintegration in table~\ref{tab:overint p4}; similarly for $p=5$ in tables~\ref{tab:uncoll p5} and~\ref{tab:overint p5}. All schemes yield the expected convergence rates of $p+1$.% like that used in ~\cite{gassner2013skew}. %Although we do not get exactly $p+1$ convergence for both ESFR split and ESFR Classical split, the two schemes converge at similar orders. Therefore the additional filtering of the volume has no influence on the orders of convergence.

 \begin{table}[H]
\resizebox{\textwidth}{!}{
%\begin{center} 
 \begin{tabular}{c c c c c c c c c}
dx & Cons. DG & OOA &ESFR Split $c_{DG}$ & OOA
& ESFR Split $c_{+}$ & OOA& ESFR Classical Split $c_+$ & OOA \\ \hline 

 2.50e-02&   7.82e-06 & -  
 & 7.72e-06 & -  & 2.22e-04  & - &  1.42e-04 & -\\
1.25e-02 &  1.94e-07  &  5.33 
 & 1.93e-07   &  5.32 & 6.77e-06  &  5.04 & 4.18e-06 & 5.09 \\
 6.25e-03&  5.17e-09  &  5.23 
 &  5.17e-09  & 5.23 &  1.98e-07 & 5.10 &  1.28e-07 &  5.03\\
 3.13e-03&   1.48e-10  &  5.12 
 &  1.48e-10 & 5.12  &  6.30e-09 &  4.97 &  4.21e-09 &  4.93\\
 1.56e-03  &4.55e-12 & 5.02 
 &   4.55e-12  & 5.02   &  1.96e-10  &   5.00    &   1.33e-10  & 4.98

\end{tabular} 
 %\end{center} 
 }
 \caption{Convergence Table $p=4$ $N_{vp}=p+1$} \label{tab:uncoll p4}
 \end{table}

\begin{table}[H]
\resizebox{\textwidth}{!}{
%\begin{center} 
 \begin{tabular}{c c c c c c c c c}
dx & Cons. DG & OOA &ESFR Split $c_{DG}$ & OOA 
& ESFR Split $c_{+}$ & OOA& ESFR Classical Split $c_+$ & OOA \\ \hline 

2.08e-02 &  1.65e-07  & - & 1.57e-07 & - 
 & 2.25e-05  & - &   1.24e-05 & -\\
 1.04e-02& 2.31e-09  & 6.15 & 2.31e-09 &  6.09
 &  4.24e-07 & 5.73 & 2.35e-07 & 5.72 \\
 5.21e-03&  3.55e-11 & 6.02 &3.56e-11  &  6.02
& 8.00e-09 & 5.73  &  4.84e-09 & 5.60 \\
 2.60e-03&  - & - & -  & - 
& 1.54e-10  &  5.70 & 9.99e-11  & 5.60 \\
  1.30e-03& - & - & - &  -
 & 2.84e-12 & 5.76 &  1.88e-12&  5.73

\end{tabular} 
 %\end{center} 
 }
 \caption{Convergence Table $p=5$ $N_{vp}=p+1$} \label{tab:uncoll p5}
 \end{table}

 %%%%%%%%%%%%%%%%%%%%%%%%%%%%%%%%%%%%%%%%%%%%%%%5
 
 %%%%%%%%%% OVER INTEGRATED
 
 %%%%%%%%%%%%%%%%%%%%%%%%%%%%%%%%%%%%%%%%%%%%%%%%%%%%%
 
 %%%%%%%%Tables proper flux face
 \begin{table}[H]
\resizebox{\textwidth}{!}{
%\begin{center} 
 \begin{tabular}{c c c c c c c c c}
dx & Cons. DG & OOA &ESFR Split $c_{DG}$ & OOA
& ESFR Split $c_{+}$ & OOA& ESFR Classical Split $c_+$ & OOA \\ \hline 

 2.50e-02&  7.37e-06   & -  
 &   7.37e-06 & -  &  2.21e-04   & - &  9.32e-05  & -\\
1.25e-02 &  1.91e-07  &   5.27
 &    1.91e-07 &  5.27 &  6.76e-06  & 5.03   &  1.16e-06 &  6.33 \\
 6.25e-03&   5.15e-09  &  5.21 
 &  5.15e-09  & 5.21 &  1.97e-07 &  5.10 &  1.94e-08  &  5.90 \\
 3.13e-03&   1.48e-10   &  5.12 
 &  1.48e-10 & 5.12  &  6.30e-09 & 4.97  &  5.05e-10 &   5.26 \\
1.56e-03 &  4.55e-12    &   5.02
 &  4.55e-12 & 5.02  &  1.96e-10 &  5.00 & 1.52e-11 & 5.06

\end{tabular} 
 %\end{center} 
 }
 \caption{Convergence Table $p=4$ Overintegrated $N_{vp}=p+3$} \label{tab:overint p4}
 \end{table}

\begin{table}[H]
\resizebox{\textwidth}{!}{
%\begin{center} 
 \begin{tabular}{c c c c c c c c c}
dx & Cons. DG & OOA &ESFR Split $c_{DG}$ & OOA 
& ESFR Split $c_{+}$ & OOA& ESFR Classical Split $c_+$ & OOA \\ \hline 

2.08e-02 &   1.56e-07  & - & 1.56e-07& - 
 &  2.24e-05  & - &   1.46e-06 & -\\
 1.04e-02&  2.33e-09  & 6.07  &  2.33e-09 &  6.07 
 &  4.23e-07 & 5.72  &   1.01e-08&  7.18\\
 5.21e-03&  3.57e-11  & 6.03 & 3.57e-11 &   6.03
& 8.00e-09 & 5.72  &  1.09e-10 & 6.53 \\
 2.60e-03&  - & - & -  & - 
&  1.54e-10  &   5.70 &  1.66e-12 &6.04 \\
  1.30e-03& - & - & - &  -
 & 2.84e-12 & 5.76& - &   -

\end{tabular} 
 %\end{center} 
 }
 \caption{Convergence Table $p=5$ Overintegrated $N_{vp}=p+3$} 
 \label{tab:overint p5}
 \end{table}

\section{Conclusion}

This paper derived dense, modal or nodal, ESFR schemes in both strong and weak forms that resulted in provable nonlinear stability and conservation
by incorporating the ESFR filter operator on both the volume and surface terms. It was shown that by considering split forms with respect to the stiffness operator rather than the differential operator, discrete integration by parts was embedded in the discretization. The stability criteria, conservation, and convergence orders were numerically verified for a wide range of general ESFR schemes.

\iffalse

This paper was the first to derive and prove a nonlinearly stable, uncollocated, dense, modal or nodal, ESFR scheme in both strong and weak form by incorporating the ESFR filter operator on both the volume and surface terms. It was shown that by considering split forms with respect to the stiffness operator rather than the differential operator, discrete integration by parts was embedded in the discretization. The stability criteria, conservation, and convergence orders were numerically verified for a wide range of general ESFR schemes.

\fi

\iffalse

In this paper it was proved that by considering split forms with respect to the stiffness operator rather than the differential operator, energy and entropy stable schemes naturally occur with dense norms. This paper is the first to achieve and prove nonlinear stability for uncollocated, dense, ESFR schemes in both strong and weak form by incorporating the ESFR filter operator on both the volume and surface terms. The stability criteria were then verified numerically. % The presentation of ESFR schemes as a filtered DG scheme was further extended to allow for over-integration, and the $\bm{K}_m$ operator was derived and presented to be independent of the reference basis.

\fi
\section{Acknowledgements}

We would like to gratefully acknowledge the financial support of the Natural Sciences and Engineering Research Council of Canada Discovery Grant Program and McGill University.
\appendix
\bibliography{bibliographie2.bib}

\end{document}